\newtheorem{theorem}{Theorem}[section]
\newtheorem{lemma}[theorem]{Lemma}
\newtheorem{proposition}[theorem]{Proposition}
\newtheorem{corollary}[theorem]{Corollary}
\theoremstyle{definition}
\newtheorem{problem}[theorem]{{\sc Problem}}
\newtheorem{claim}{\noindent {\bf Claim}}
\newcommand{\R}{\mathbb{R}}
\newcommand{\N}{\mathbb{N}}
\newcommand{\Z}{\mathbb{Z}}
\let\rho=\varrho
\DeclareMathOperator{\Eqv}{Eqv}
\DeclareMathOperator{\Pol}{Pol}
\DeclareMathOperator{\Cong}{Cong}
\title[Operations preserving equivalence relations]{Operations preserving equivalence relations}
\author[M.Pouzet] {Maurice Pouzet}
\address{Univ. Lyon, University Claude-Bernard  Lyon1, UMR 5208, Institut Camille Jordan, 
43, Bd. du 11 Novembre 1918,
69622 Villeurbanne, France et Department of Mathematics and Statistics, The University of Calgary, Calgary, Alberta, Canada}
\email{pouzet@univ-lyon1.fr}
\date{\today}
\begin{document}

\keywords{Clones, equivalence relations, arithmetic lattices}
\subjclass[2000]{94D05, 03B50}
\begin{abstract}
  In \cite{cgg1}, C\'egielski, Grigorieff and Guessarian characterized unary self-maps on the set $\Z$ of integers which preserve all congruences of the additive group.  In this note, we propose a shorter and straigthforward proof. We replace this result in the frame of universal algebra. 
%
\end{abstract}
\maketitle
\section{Introduction}
A {\it binary relation} on a set $A$ is a  subset $\rho$ of the cartesian product $A\times A$. We write $x\rho y$ instead of $(x,y)\in \rho$. If $\rho$ is an equivalence relation, we also write $x\equiv y (\rho)$.

A map $f:A\rightarrow A$ \emph{preserves} $\rho$ if \begin{equation}
x \rho y \Rightarrow f(x)\rho f(y) \label{eq:1}
\end{equation}
for all $x,y\in A$.

For a  non-negative integer $n$, a map $f:A^n\rightarrow A$ is called an \emph{operation} on $A$. Such an operation \emph{preserves} $\rho$ if \begin{equation}
 f(x)\rho f(y) \label{eq:2}
\end{equation}
 for every $x:= (x_1,\dots x_n), y:=(y_1,\dots y_n)\in A^n$ such that $x_i\rho y_i$ for all $i=1,n$. 
 
 A pair made a set $A$ and a  collection $F$ of  operations on $A$ is called an \emph{algebra}. Equivalence relations preserved by all members  of $F$ are called \emph{congruences}. The study of the relationship between the set of congruences of an algebra and the set of maps which preserve all congruences is one of the goals of universal algebra. 

 For example, if the algebra is the set $\Z$ of relative integers equipped with the group operation $+$, the congruences of this algebra are the ordinary congruences: each one, say $\equiv_r$,   is determined  by a non-negative integer $r$ and is defined by $x\equiv_r y$ if $x-y$ is a multiple of $r$. On the opposite, the determination  of the operations preserving all the congruences of $(\Z, +)$ is a more serious task. 

 This is handled for unary functions in \cite{cgg1} by  C\'egielski, Grigorieff and Guessarian (CGC). Their  description is given in terms of Newton expansion. 
 The proof is by no means trivial.
 In this note we  propose a shorter and straigthforward proof.  While we were writing this paper, C.Delhomm\'e \cite{delhomme1} found a five lines proof of the main argument (Lemma \ref{wasdifficult}); we reproduce it with his permission. In the last section we replace the CGC result in the frame of universal algebra.

 \section {Maps preserving the congruences of $\Z$}
The set $\mathcal C$ of maps $f:\Z \rightarrow \Z$ which preserve all congruences on $\Z$ is \emph{locally closed}, meaning that  $f\ \mathcal C$  iff for every finite subset $A$ of $\Z$, (in fact, every $2$-element subset of $\Z$) $f$ coincides on $A$ with some $g \in  \mathcal C$ (in topological terms, $\mathcal C$ is a closed subset of the topological space  $\Z^{\Z}$ of maps $f:\Z \rightarrow \Z$ equipped with the pointwise convergence topology, the topology on $\Z$ being discrete). The set $\mathcal C$  contains all polynomials with integer coefficients. But it contains others (e.g.  the polynomial  $g(x):= \frac{x^2(x-1)^2}{2}$  is a congruence preserving map on $\Z$) and other maps than polynomials. 
 
 Let $n$ be  a non-negative integer, let $lcm(n):= 1$ if $n=0$, otherwise  let $lcm ({n})$ be the least common multiple of $1, \dots n$, i.e. $lcm({n}):= lcm \{1, \dots, n\}$. If $X$ is an indeterminate (as well as a number) we set $X^{\underline 0}:= 1$, $X^{\underline 1}:=X$, $X^{\underline n}:= X\cdot (X-1)\cdot \dots \cdot (X-n+1)$. The \emph{binomial polynomial}  is  ${X \choose n}:= \frac{X^{\underline n}}{n!}$. 
 CGG  show  that:

  \begin{theorem}\label{main}
A map  $f: \Z\rightarrow \Z$  preserves all congruences iff this is an infinite  sum 
 
 $$f(x)= \sum_{n=0, \infty} a_{n}\cdot P_{n}(x)$$ where each $a_n$ is an integer multiple of $lcm ({n})$ and  $P_{n}$ the polynomial equal to 
 ${X+k\choose 2k}$ if $n=2k$  and equal to ${X+k\choose 2k+1}$ if $n= 2k+1$. 
  
  \end{theorem}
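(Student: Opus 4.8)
The plan is to characterize the set $\mathcal{C}$ of congruence-preserving maps $f:\Z\to\Z$ by first reducing the condition ``$f$ preserves all congruences'' to an arithmetic divisibility condition on finite differences, and then matching this condition against a convenient integer basis of the polynomials. Concretely, a map $f$ preserves $\equiv_r$ for every $r$ if and only if $r \mid f(x+r)-f(x)$ for all $x\in\Z$ and all $r\geq 1$ (using that $x\equiv_r y$ iff $y=x+kr$, and reducing to $k=1$ by a telescoping sum). So the first step is to record this reformulation and note that $\mathcal{C}$ is a subring of $\Z^{\Z}$ containing the polynomials, and is locally closed, so it is determined by its behaviour on finite intervals.

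The second step is to introduce the right integer-linear basis. Since $\mathcal{C}$ contains all integer polynomials but strictly more, one should not expand $f$ in the basis $\binom{x}{n}$ (whose integer span is exactly the integer-valued polynomials) but in a rescaled basis. The natural candidate is $Q_n(x) := lcm(n)\cdot P_n(x)$ with $P_n$ as in the statement; the claim to verify is that $\{Q_n\}_{n\geq 0}$ is a $\Z$-basis of $\mathcal{C}\cap(\text{polynomials})$, and more importantly that every $f\in\mathcal{C}$ has a unique formal expansion $f=\sum_n a_n P_n$ with $a_n\in lcm(n)\Z$, the sum converging pointwise because $P_n(x)=0$ for all integers $x$ with $|x|\leq$ (roughly) $n/2$. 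The ``only if'' direction then amounts to: given $f\in\mathcal C$, define $a_n$ recursively by evaluating $f$ and the already-constructed partial sums at successive integers $0,1,-1,2,-2,\dots$ (the points where successive $P_n$ first become nonzero), and show the resulting $a_n$ are divisible by $lcm(n)$.

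The ``if'' direction is the easier half: one checks that each $P_n$ is an integer-valued polynomial and that $lcm(n)\cdot P_n$ preserves all congruences, i.e. $r \mid lcm(n)P_n(x+r)-lcm(n)P_n(x)$ for all $r$. This is where the combinatorics of $X^{\underline{n}}$ is used: the finite difference $\Delta_r$ applied to a binomial polynomial is again an integer combination of binomial polynomials, and the factor $lcm(n)$ is exactly what clears the denominators $n!$ appearing after one accounts for the gcd of the relevant binomial coefficients — this is essentially the content of the cited Lemma \ref{wasdifficult} (Delhommé's short argument), which I would invoke rather than reprove. Closure of $\mathcal C$ under pointwise sums and the local closedness then give that the full series $\sum a_n P_n$ lies in $\mathcal C$.

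The main obstacle is the sharp arithmetic claim that $lcm(n)$ is both necessary and sufficient as the divisibility modulus for the coefficient $a_n$ — equivalently, that the map $f\mapsto(a_n)_n$ sends $\mathcal C$ exactly onto $\prod_n lcm(n)\Z$. Sufficiency is Lemma \ref{wasdifficult}; necessity requires producing, for each $n$, a congruence $\equiv_r$ with $r\mid lcm(n)$ that is violated if $lcm(n)\nmid a_n$, which is done by evaluating the preservation condition at the critical point where $P_n$ activates and using that $P_{n+1},P_{n+2},\dots$ vanish there while $P_0,\dots,P_{n-1}$ contribute controlled (already-divisible) amounts. Organizing this induction cleanly — choosing the enumeration of evaluation points and tracking the residues — is the technical heart of the short proof.
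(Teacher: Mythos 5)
Your proposal follows essentially the same route as the paper: recursive extraction of the coefficients $a_n$ at the points where successive $P_n$ first become nonzero, Lemma~\ref{wasdifficult} (translated by $k$) for sufficiency of the modulus $lcm(n)$, the vanishing of $P_n$ on an interval of $n$ consecutive integers together with preservation of $\equiv_1,\dots,\equiv_n$ for necessity (the paper's Lemma~\ref{smalltrick}), and local closedness of $\mathcal C$ for passing from partial sums to the locally finite series. One slip to correct: the evaluation points must be taken in the order $0,-1,1,-2,2,\dots$ (so that the $(n+1)$-st point is exactly $A_{n+1}\setminus A_n$, where $A_n$ is the common vanishing interval of $P_n, P_{n+1},\dots$), not $0,1,-1,2,-2,\dots$ as you wrote --- with your order one gets $f(1)=a_0+a_1+a_2$ and the recursion is no longer triangular; your own parenthetical principle forces the correct order.
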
  
  Two special cases of this result  are:
   
 \begin{enumerate}
  \item Polynomial  functions of the form  ${lcm(n)}\cdot {x \choose n}$ preserve all congruences; 
   \item Every polynomial  function which preserves all congruences  is a finite linear sum with integer coefficients  of these polynomials; 
   \end{enumerate}

Using a result of Kaarli \cite{kaarli}, based on the Chinese remainder theorem, it follows from Theorem \ref{main}: 

\begin{theorem} \label {thm:extension}Every map $f$ from a finite subset $A$ of $\Z$ and with values in $\Z$ which preserves all congruences extends to  a polynomial function  preserving all congruences. 
\end{theorem}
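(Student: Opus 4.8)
The plan is to combine the structural description in Theorem~\ref{main} with Kaarli's interpolation result, the bridge between them being a simple truncation.

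First I would make the hypothesis concrete. The congruences of $(\Z,+)$ are the relations $\equiv_{r}$, and $a\equiv_{r}b$ means $r\mid a-b$; hence a map $f$ defined on a subset $A$ of $\Z$ preserves all congruences if and only if $(a-b)\mid (f(a)-f(b))$ for all $a,b\in A$. This divisibility condition is the only form in which the hypothesis will be used. Kaarli's theorem \cite{kaarli} (which applies to $\Z$ because it is a principal ideal domain, so that the Chinese remainder theorem is available) asserts that a partial map satisfying this condition on a finite subset $A$ of $\Z$ extends to a map $g:\Z\to\Z$ preserving all congruences; the Chinese remainder theorem enters precisely to resolve the integrality obstruction one prime at a time.

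It then remains to replace $g$ by a polynomial. By Theorem~\ref{main} there are integers $a_{n}$, each a multiple of $lcm(n)$, with $g(x)=\sum_{n\ge 0}a_{n}P_{n}(x)$. The reason the $P_{n}$ are chosen in that particular shape is exactly this: for any fixed integer $a$ one has $P_{n}(a)=0$ as soon as $n$ exceeds a threshold of size roughly $2|a|$, since the falling factorial defining $P_{n}$ then already runs through the value $0$. Choosing $N$ large enough that $P_{n}(a)=0$ for every $a\in A$ and every $n>N$, the polynomial
\[ P(x):=\sum_{n=0}^{N}a_{n}P_{n}(x) \]
satisfies $P(a)=g(a)=f(a)$ for all $a\in A$. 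Finally $P$ is a finite integer sum $\sum a_{n}P_{n}$ with $lcm(n)\mid a_{n}$, so Theorem~\ref{main} shows that $P$ too preserves all congruences. Thus $P$ is the desired polynomial extension.

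The one substantive step is Kaarli's extension result; once a total congruence-preserving $g$ extending $f$ is at hand, the passage to a polynomial is the routine truncation above, the only point needing care being the elementary estimate that $P_{n}$ vanishes on the whole of the finite set $A$ for every $n$ beyond an explicit bound.
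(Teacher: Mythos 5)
Your proof is correct and follows essentially the route the paper itself takes: Kaarli's Chinese-remainder extension combined with the Newton expansion in the basis $P_n$, the vanishing of $P_n$ on the finite sets $A_n$ (namely $\{-k,\dots,k-1\}$ for $n=2k$ and $\{-k,\dots,k\}$ for $n=2k+1$) being exactly the property the paper exploits. The only organizational difference is that the paper extends $f$ just to a finite interval containing $A$ and there builds a polynomial of bounded degree directly, checking via the argument of Lemma \ref{smalltrick} that its coefficients are multiples of $lcm(n)$, whereas you extend to all of $\Z$, invoke Theorem \ref{main} for the total map, and then truncate the series; both reductions are sound.
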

 
\begin{corollary}
The set $ \mathcal C$ of maps  $f: \Z\rightarrow \Z$ which preserve all congruences is the local closure of the set of polynomials preserving all congruences.
\end{corollary}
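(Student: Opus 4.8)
The corollary follows by combining the two preceding results, so the plan is essentially to unwind the definitions and check that the two inclusions hold.

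\smallskip

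\noindent\textbf{Plan of proof.} First I would recall that $\mathcal C$ is locally closed (this was observed at the beginning of the section): a map $f\colon\Z\to\Z$ lies in $\mathcal C$ if and only if it agrees on every finite (even every $2$-element) subset of $\Z$ with some member of $\mathcal C$. Let $\mathcal P\subseteq\mathcal C$ denote the set of polynomial functions preserving all congruences, and let $\overline{\mathcal P}$ be its local closure, i.e. the set of $f\colon\Z\to\Z$ such that for every finite $A\subseteq\Z$ there is $g\in\mathcal P$ with $f\restriction A=g\restriction A$. I want to show $\mathcal C=\overline{\mathcal P}$.

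\smallskip

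\noindent The inclusion $\overline{\mathcal P}\subseteq\mathcal C$ is the easy direction: since $\mathcal P\subseteq\mathcal C$ we have $\overline{\mathcal P}\subseteq\overline{\mathcal C}$, and $\overline{\mathcal C}=\mathcal C$ because $\mathcal C$ is locally closed. (Concretely: if $f$ agrees locally with members of $\mathcal C$, then for any $x\equiv y\,(\rho)$ pick $g\in\mathcal C$ agreeing with $f$ on $\{x,y\}$; then $f(x)=g(x)\rho\, g(y)=f(y)$.)

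\smallskip

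\noindent For the reverse inclusion $\mathcal C\subseteq\overline{\mathcal P}$, take $f\in\mathcal C$ and a finite subset $A\subseteq\Z$. Then $f\restriction A$ is a map from a finite subset of $\Z$ into $\Z$ preserving all congruences, so by Theorem~\ref{thm:extension} it extends to a polynomial function $g$ preserving all congruences, that is, $g\in\mathcal P$ and $g\restriction A=f\restriction A$. Hence $f\in\overline{\mathcal P}$. Combining the two inclusions gives $\mathcal C=\overline{\mathcal P}$, which is the assertion. There is no real obstacle here; the only point worth noting is that the content is entirely carried by Theorem~\ref{thm:extension} (extension of a congruence-preserving partial map to a congruence-preserving polynomial), together with the already-noted local closedness of $\mathcal C$, so the corollary is just the bookkeeping that assembles them.
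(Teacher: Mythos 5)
Your proof is correct and follows exactly the route the paper intends: the paper states this corollary without proof as an immediate consequence of Theorem~\ref{thm:extension} together with the local closedness of $\mathcal C$ noted at the start of the section, which is precisely the bookkeeping you carry out.
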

Being closed  in  the set $\Z^{\Z}$ of all maps $f :\Z\rightarrow \Z$ endowed with the pointwise convergence topology, the set $\mathcal C$ is a Baire subset of $\Z^{\Z}$.  Hence, it is uncountable (apply Theorem \ref{main}, or observe that it has no isolated point and apply Baire theorem). In particular,  it contains   functions which are not polynomials. A striking example using Bessel functions is given in CGG's paper.    
   


The  description  of polynomial  functions with integer values  was given  by Polya in 1915 (cf Theorem 22 page 794 in Bhargava \cite{bhargava}). It is instructive. Let us look at it. 
\begin{lemma}\label{integervalues} Polynomial  functions from $\Z$ to $\Z$ are finite linear sums with integer coefficients of polynomial functions of the form ${x \choose k}$. 
\end{lemma}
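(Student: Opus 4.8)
The plan is to prove that every polynomial function $f : \Z \to \Z$ is a finite integer linear combination of binomial polynomials ${x \choose k}$, and conversely that each ${x \choose k}$ takes integer values on $\Z$. The converse is immediate once one observes that ${m \choose k}$ counts $k$-element subsets of an $m$-element set for $m \geq k \geq 0$, and that the identity ${m \choose k} = (-1)^k {k - m - 1 \choose k}$ (or a direct telescoping argument) handles negative $m$; integer linear combinations of integer-valued functions are again integer-valued. So the content is the forward direction.

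First I would work in the rational vector space of polynomials of degree at most $d$, where $d = \deg f$, and note that the binomial polynomials ${X \choose 0}, {X \choose 1}, \dots, {X \choose d}$ form a $\Q$-basis: indeed ${X \choose k}$ has degree exactly $k$ with leading coefficient $1/k!$, so they are triangular with respect to the monomial basis and hence linearly independent, and there are $d+1$ of them. Therefore $f = \sum_{k=0}^{d} a_k {X \choose k}$ for unique rational coefficients $a_k$. It remains only to show each $a_k \in \Z$.

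The key step is the finite difference operator $\Delta$ defined by $(\Delta g)(x) := g(x+1) - g(x)$. The crucial computation is $\Delta {X \choose k} = {X \choose k-1}$ (Pascal's identity), so $\Delta$ acts as a shift on the binomial basis, and consequently $a_k = (\Delta^k f)(0)$ for each $k$, which one gets by applying $\Delta^k$ to the expansion $f = \sum_j a_j {X \choose j}$ and evaluating at $0$. Since $f$ maps $\Z$ to $\Z$, the operator $\Delta$ preserves the property of being integer-valued on $\Z$ (it is a difference of two integer values), hence so does every power $\Delta^k$; thus $(\Delta^k f)(0) \in \Z$, giving $a_k \in \Z$. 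This closes the proof.

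I do not expect a serious obstacle here — the argument is the classical Newton forward-difference expansion. The only point requiring a little care is making the basis/uniqueness argument clean (working over $\Q$ first, then descending), and being explicit that $\Delta$ stabilizes the set of $\Z$-valued polynomial functions so that iterating it is legitimate; everything else is a short computation with Pascal's rule.
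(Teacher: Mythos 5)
Your proof is correct, and it reaches the integrality of the coefficients by a genuinely different mechanism than the paper. The paper, after the same basis observation, argues by induction on the degree: it truncates $P$ to $Q:=\lambda_0+\dots+\lambda_{n-1}{X\choose n-1}$, uses that ${k\choose n}=0$ for $0\le k\le n-1$ to get $Q(k)=P(k)$ there, applies the induction hypothesis to $Q$, and then extracts $\lambda_n=P(n)-Q(n)$ from ${n\choose n}=1$. You instead use the forward difference operator and the shift identity $\Delta{X\choose k}={X\choose k-1}$ to obtain the closed formula $a_k=(\Delta^k f)(0)$, with integrality following because $\Delta$ preserves integer-valuedness. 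The two arguments exploit the same triangular structure (your formula is exactly the closed form of the paper's elimination), but yours buys an explicit expression for the coefficients, while the paper's version makes visible a fact it reuses later (in Lemma \ref{preservecongruence} and Lemma \ref{onepolynomialextension}): that integer values at the $n+1$ points $0,\dots,n$ already force integrality of all coefficients. Your version in fact yields the same refinement, since $(\Delta^k f)(0)$ depends only on $f(0),\dots,f(k)$, though you do not say so. One cosmetic point: you posit rational coefficients at the outset, whereas the paper expands over the reals; this costs nothing since your formula shows the coefficients are integers in any case, but stating the uniqueness over $\R$ (or noting that integer values at $d+1$ points force rational coefficients) would be cleaner.
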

\begin{proof}
Let $P$ be a polynomial of degree $n$ over the reals. Since the ${X\choose k}$, $k\in \N$,  have different degrees, they form a basis, hence 
$$P:= \lambda_0+ \dots+ \lambda_k \cdot  {X\choose k}+ \dots+ \lambda_n \cdot {X\choose n}$$ for some reals $\lambda_0, \dots, \lambda_n$. 

Since $[{X\choose k}](X=m)$ is a binomial coefficient (for $k\leq m$), every linear combination with integer coefficients of these polynomials takes integer values. Thus, if the $\lambda_k$'s are integers, $P$ takes integer values. Conversely, suppose  
 that the values of $P$ are integers  for $X:= 0, \dots, n$. A trivial recurrence on the degree  will show that the coefficients are integers. 
Indeed, let $$Q:= \lambda_0+ \dots+ \lambda_k \cdot  {X\choose k}+ \dots+ \lambda_{n-1}\cdot {X \choose n-1}.$$
Since  $Q(k)=P(k)$ for all $k\leq n-1$, each $Q(k)$ is an integer. Hence  induction applies to $Q$ and yields  that all $\lambda_0, \dots, \lambda_{n-1}$ are integers. Now, $P(n)= Q(n)+ 
\lambda_{n}\cdot  [{X\choose n}](X=n )$. Since $\lambda_0, \dots, \lambda_{n-1}$ are integers, $Q(n)$ is an integer; since  $[{X\choose n}](X= n)=1$, it follows that $\lambda_n$ is an integer.  This proves our affirmation about the integrality of the coefficients. 

 The lemma is proved. 

\end{proof}

One can say a bit more:
\begin{lemma}\label{onepolynomialextension} $(a)$ Every map $f$ from a non-empty finite subset $A$ of $\Z$ and values in $\Z$ extends to a polynomial function with integer values and degree at  most $n$ where $n+1$ is the cardinality of the smallest interval containing $A$. $(b)$ For every map  $f:\Z :\rightarrow \Z$  there are integer coefficients $a_n, n\in \N$, such that  $$f(x)= \sum_{n=0, \infty} a_{n}\cdot P_{n}(x)$$ for every  $x\in \Z$, where $P_{n}$ the polynomial equal to 
 ${X+k\choose 2k}$ if $n=2k$  and equal to ${X+k\choose 2k+1}$ if $n= 2k+1$.
\end{lemma}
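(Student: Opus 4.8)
The plan is to prove the two parts separately, starting with $(a)$, which is the genuine content; part $(b)$ will then follow by a limiting/compatibility argument.

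\medskip

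\emph{Part $(a)$.} Suppose the smallest interval containing $A$ is $\{m, m+1, \dots, m+n\}$, so it has $n+1$ elements and $A$ is a subset of it. First I would reduce to the case $A = \{m, \dots, m+n\}$: indeed, if $f$ is defined only on $A$, extend it arbitrarily (say by $0$) to the whole interval; an extension of the bigger map restricts to an extension of $f$. By translating the variable (replacing $x$ by $x+m$, which sends binomial polynomials to $\Z$-linear combinations of binomial polynomials and hence preserves integrality of coefficients), I may assume $A = \{0, 1, \dots, n\}$. Now I want the unique polynomial $P$ of degree $\le n$ interpolating the $n+1$ prescribed values. Writing $P = \sum_{k=0}^{n} \lambda_k \binom{X}{k}$, the point is that the coefficients $\lambda_k$ are exactly the finite differences: $\lambda_k = (\Delta^k f)(0) = \sum_{j=0}^{k} (-1)^{k-j}\binom{k}{j} f(j)$, which are manifestly integers since the $f(j)$ are. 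Concretely I would run the same downward induction on degree as in the proof of Lemma \ref{integervalues}: the polynomial $Q$ of degree $\le n-1$ interpolating $f$ at $0, \dots, n-1$ has integer coefficients by induction, and then $\lambda_n = f(n) - Q(n)$ is an integer because $\binom{n}{n}=1$. Either way one gets $P \in \Z[\binom{X}{k}: k \le n]$, which by Lemma \ref{integervalues} takes integer values on all of $\Z$, and $P$ extends $f$ with $\deg P \le n$.

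\medskip

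\emph{Part $(b)$.} Here $f$ is defined on all of $\Z$, and there is no single finite-degree polynomial to hope for; instead I want an infinite Newton-type expansion in the $P_n$. The key remark is that the polynomials $P_0, P_1, P_2, \dots$ have strictly increasing degrees — $\deg P_{2k} = \deg P_{2k+1} = \dots$; more precisely $\deg P_n = \lceil n/2 \rceil$ for the ``$+k$'' shifts chosen, but what matters is simply that $(\deg P_n)_{n\ge 0}$ is nondecreasing and eventually exceeds any bound, and that consecutive polynomials are ``triangular'' with respect to the set $\{0, \pm 1, \pm 2, \dots\}$ of integers in the sense that $P_n$ vanishes on a suitable $n$-element set of integers while taking value $\pm 1$ at the ``next'' point. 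The plan is to enumerate $\Z$ as $z_0 = 0, z_1, z_2, \dots$ in the order $0, 1, -1, 2, -2, \dots$ used to define the $P_n$'s, so that $P_n$ has a zero of the right order on $\{z_0, \dots, z_{n-1}\}$ and $P_n(z_n) = \pm 1$. Then I would determine the integer coefficients $a_n$ greedily: having chosen $a_0, \dots, a_{n-1}$, the finite sum $S_{n-1} := \sum_{j<n} a_j P_j$ already agrees with $f$ on $\{z_0, \dots, z_{n-1}\}$, and since $P_n(z_i)=0$ for $i<n$ while $P_n(z_n)=\pm 1$, there is a unique integer $a_n$ with $S_{n-1}(z_n) + a_n P_n(z_n) = f(z_n)$, namely $a_n = \pm (f(z_n) - S_{n-1}(z_n))$. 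Finally, because every $x \in \Z$ equals some $z_N$ and $P_n(z_N) = 0$ for all $n > N$, the infinite sum $\sum_n a_n P_n(x)$ is actually finite at each integer point and equals $f(x)$; this is exactly the asserted identity. (The $\sum_{n=0,\infty}$ notation in the statement should be read in this pointwise-finite sense, matching the convention in Theorem \ref{main}.)

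\medskip

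\emph{Main obstacle.} Part $(a)$ is routine Lagrange/Newton interpolation plus the integrality argument already in Lemma \ref{integervalues}, with only the translation step needing a line of care. The substantive point in part $(b)$ is verifying the ``triangularity'' of the sequence $(P_n)$ against the enumeration $0, 1, -1, 2, -2, \dots$ of $\Z$: that $\binom{X+k}{2k}$ vanishes on $\{-k, \dots, k-1\}$ (the first $2k$ points of the enumeration, suitably ordered) and takes value $1$ at $X = k$ (the $(2k+1)$-st point), and similarly that $\binom{X+k}{2k+1}$ vanishes on those $2k+1$ points and equals $\pm 1$ at $X = -k-1$. Once this combinatorial bookkeeping about which binomial polynomial kills which prefix of the enumeration is pinned down, the greedy construction of the $a_n$ and the pointwise-finiteness of the sum are immediate, and it is worth noting this is precisely the ``denominator-free'' skeleton underlying Theorem \ref{main}, the difference there being the extra divisibility constraint $lcm(n) \mid a_n$ forced by congruence-preservation rather than by mere $\Z$-valuedness.
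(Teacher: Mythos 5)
Your proof is correct and follows essentially the same route as the paper: part $(a)$ by reducing to an interval, translating to $\{0,\dots,n\}$, and Newton interpolation with integer finite-difference coefficients via the same downward induction as Lemma \ref{integervalues}; part $(b)$ by the greedy triangular construction of the $a_n$ against an enumeration of $\Z$, with pointwise finiteness of the sum. Only two harmless slips: the enumeration consistent with your (correct) description of where each $P_n$ vanishes is $0,-1,1,-2,2,\dots$ (the paper's $b_{2k}=k$, $b_{2k+1}=-k-1$), not $0,1,-1,2,-2,\dots$, and $\deg P_n=n$ rather than $\lceil n/2\rceil$ --- neither affects the argument.
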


\begin{proof}
\noindent $(a)$. Beware, Lagrange approximation will not do; for an example, in order to extend a map defined on a $2$-element subset, we may need a polynomial of large degree. First, we show by induction on $n$ that any $f$ defined on $\{0, \dots n\}$ extends to a polynomial of degree at most $n$. If $n = 0$, any constant polynomial equal to $f(0)$ will do. Suppose $n> 0$. Via the induction hypothesis, there is some polynomial $Q:=  \lambda_0+ \dots+ \lambda_k \cdot  {X\choose k}+ \dots+ \lambda_{n-1}\cdot {X\choose {n-1}}$ extending $f$ on $\{0, \dots n-1\}$. Let $P:= Q+\lambda_{n}\cdot {X\choose n}$. We have $P(k)= Q(k)=f(k)$ for all $k<n$ and $P(n)= Q(n)+\lambda_{n}$. If we set $\lambda_{n}:= f(n)-Q(n)$ we have $P(n)=f(n)$. Hence $P$ extends $f$, proving that the property holds for all $n$. Now, if $f$ is defined on $A$, let $\overline A$ be the least interval containing $A$. Extend $f$ to $\overline f$ defined on $\overline A$ and taking  integer values. Then translate $\overline A$ to $\{0, \dots, n\}$ and apply the previous case.

\noindent $(b)$. We define a sequence of intervals $A_n$, $n\in \N$ of $\Z$ where $A_{0}:= \emptyset$ and for $k \in \N$,   $A_{2k}:= \{-k, \dots, k-1\}$ and $A_{2k+1}:= \{-k, \dots, k\}$. Let  $(b_n)_{n\in \N}$ be the sequence of elements of $\Z$ defined by $b_{2k}:= k$, $b_{2k+1}:= -k-1$.  Trivially, this sequence exhaust $\Z$,  $P_n$ is identically $0$ on $A_n$ and  $\{b_n\}= A_{n+1}\setminus A_n$.  Furthermore, $P_n(b_n)=(-1)^{n}$ (An other choice for  $P_n$ would have been simpler).  We set  $a_0:= f(b_0)=f(0):= \overline f(b_0)$. Let $n>0$. Suppose $a_i$ defined for $i<n$. Choose $a_n$ such that  $a_n\cdot  P_n (b_n)=f(b_n)-\sum_{i<n} a_i\cdot P_i(b_n)$.  Then, $\overline f(b_n)= \sum_{i<n} a_i\cdot P_i(b_n)+ a_n\cdot P_n(b_n)$. Since $P_{n+k}(b_n)=0$ for $k\geq 1$, we have  $\overline f(b_n)= \sum_{i:= 0, \infty} a_i\cdot P_i(b_n)$, hence  $\overline f(x)= \sum_{n=0, \infty} a_{n}\cdot P_{n}(x)$ as claimed.   
For an example, $\overline f(b_0)= a_0$, $\overline f(b_1)= a_0-a_1$, $\overline f(b_2)= a_0+a_1+a_2$. 

\end{proof}

The crucial fact in Lemma \ref{integervalues} was that the polynomial ${X\choose k}$ takes integer values. In order to prove Theorem \ref {main} we need to prove that the polynomial $lcm (k)\cdot {X\choose k}$ preserves all congruences.

\begin{lemma}\label{smalltrick}  Let $f(x):= {\lambda_k}\cdot {x\choose k}$. If $f$ preserves the congruences $\equiv_{i}$ for all $i:=0, \dots, k$ then $\lambda_k$ is a multiple of $lcm(k)$.
\end{lemma}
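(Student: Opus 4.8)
The plan is to reduce the claim "$lcm(k)$ divides $\lambda_k$" to the family of claims "$i$ divides $\lambda_k$" for each $i\in\{1,\dots,k\}$: indeed $lcm(k)=lcm\{1,\dots,k\}$, so an integer divisible by every $i\le k$ is automatically divisible by $lcm(k)$. The case $k=0$ is vacuous, since $lcm(0)=1$ divides every integer, so we may assume $k\ge 1$ and fix an arbitrary $i\in\{1,\dots,k\}$.

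The key observation is that the binomial polynomial ${x\choose k}$ takes values $0$ and $1$ on two integers that are congruent modulo $i$. Concretely, take $x:=k$ and $y:=k-i$. Since $1\le i\le k$ we have $0\le k-i<k$, hence ${k-i\choose k}=0$, while ${k\choose k}=1$; and clearly $k\equiv k-i\ (\equiv_i)$. (For the boundary value $i=k$ one uses $y=0$ and ${0\choose k}=0$, which holds because $k\ge 1$.)

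Now apply the hypothesis that $f(x)=\lambda_k\cdot{x\choose k}$ preserves $\equiv_i$ to the pair $(k,k-i)$: it gives $f(k)\equiv f(k-i)\ (\equiv_i)$, that is, $i$ divides
$$f(k)-f(k-i)=\lambda_k\Big({k\choose k}-{k-i\choose k}\Big)=\lambda_k .$$
As $i$ was an arbitrary element of $\{1,\dots,k\}$, the integer $\lambda_k$ is divisible by $lcm\{1,\dots,k\}=lcm(k)$, which is the assertion. I do not expect any genuine obstacle here; the only point requiring a moment's care is the degenerate case $i=k$, handled by replacing $k-i$ with $0$ as noted above.
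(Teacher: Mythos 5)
Your proof is correct and is essentially the paper's own argument: the paper takes the pairs $(k,i)$ for $i=0,\dots,k-1$ and uses preservation of $\equiv_{k-i}$ to conclude $k-i$ divides $f(k)-f(i)=\lambda_k$, which is your argument after the reindexing $i\mapsto k-i$. No substantive difference.
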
 
\begin{proof}
 For $i:=0, 1, \dots, k-1$, we have $f(i)=0$. If $f$ preserves $\equiv_{k-i}$, $f(k)=f(k)-f(i)$ is a multiple of $k-i$, hence  
 $f(k)$ is a multiple of $k, k-1, \dots, 1$. Since $f(k)= \lambda_k$, the result follows. 
\end{proof}

\begin{lemma}\label{wasdifficult} Let $n$ be  a non-negative integer and  $f_n(x):= {lcm(n)}\cdot {x\choose n}$. Then $f$ preserves all congruences. \end{lemma}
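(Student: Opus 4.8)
The plan is to reduce the claim to a single congruence step and then read everything off from the Vandermonde identity. First I would note that a map $g\colon\Z\to\Z$ preserves $\equiv_r$ as soon as $r$ divides $g(x+r)-g(x)$ for every $x\in\Z$: if $x\equiv_r y$ then $y=x+mr$ for some $m\in\Z$, and $g(y)-g(x)$ telescopes into $\pm$ a sum of terms of the form $g(z+r)-g(z)$, each a multiple of $r$. The congruences $\equiv_0$ (equality) and $\equiv_1$ (the full relation) are preserved by every map, so it suffices to prove, for every integer $r\ge 1$ and every $x\in\Z$, that
$$r \mid f_n(x+r)-f_n(x)= lcm(n)\left( \binom{x+r}{n}-\binom{x}{n}\right).$$

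Next I would apply the Vandermonde identity $\binom{X+r}{n}=\sum_{j=0}^{n}\binom{r}{j}\binom{X}{n-j}$, which, being a polynomial identity in $X$, holds for every integer value of $x$, positive or negative; recall also that $\binom{x}{k}\in\Z$ for all $x\in\Z$ and $k\in\N$ (Lemma \ref{integervalues}). Isolating the term $j=0$ gives
$$f_n(x+r)-f_n(x)= lcm(n)\sum_{j=1}^{n}\binom{r}{j}\binom{x}{n-j},$$
so it is enough to check that $r$ divides $lcm(n)\binom{r}{j}$ for each $j$ with $1\le j\le n$. For this I would use $\binom{r}{j}=\frac{r}{j}\binom{r-1}{j-1}$, which yields
$$lcm(n)\binom{r}{j}=\frac{lcm(n)}{j}\cdot \binom{r-1}{j-1}\cdot r .$$
Since $1\le j\le n$, the integer $j$ divides $lcm(n)=lcm\{1,\dots,n\}$, hence $lcm(n)/j$ is an integer and the right-hand side is a multiple of $r$. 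Summing over $j$ completes the argument.

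I do not expect a genuine obstacle. The lemma looks delicate only because $\binom{x}{n}$ by itself does not preserve congruences; but once the forward difference $f_n(x+r)-f_n(x)$ is expanded by Vandermonde, the coefficient $lcm(n)$ is exactly what is needed to absorb the denominators $j\le n$ appearing in $\binom{r}{j}=\frac{r}{j}\binom{r-1}{j-1}$. The only points requiring a little care are the reduction to the one-step case $y=x+r$ and the fact that the binomial polynomials and the Vandermonde identity behave correctly on negative integers, both of which are routine. This is presumably essentially Delhommé's short argument.
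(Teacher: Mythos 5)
Your proof is correct, but it is not the route taken in the paper's own proof of Lemma \ref{wasdifficult}. There, the statement is reduced to the claim that $k$ divides $f_n(x+k)-f_n(x)$ for every nonzero $k$, and that claim is established by a double induction, on $n$ and on $x$, driven by Pascal's identity ${X\choose n}={X-1\choose n}+{X-1\choose n-1}$, with separate treatment of the base cases $n\in\{0,1\}$, of $x=0$ (where a preliminary claim that $k$ divides $f_n(k)$ is needed), and of negative $x$ via the reflection ${-x'\choose n}=(-1)^{n}{x'+n-1\choose n}$. Your argument instead expands the forward difference by Vandermonde's identity and absorbs the denominator $j$ of ${r\choose j}=\frac{r}{j}{r-1\choose j-1}$ into $lcm(n)$; this is precisely Delhomm\'e's five-line proof, which the paper reproduces immediately after its own. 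What your version buys is brevity and transparency --- the role of $lcm(n)$ becomes visible in a single line --- at the small cost of having to justify Vandermonde at negative arguments, which you correctly do via the polynomial-identity argument (the paper's version of Delhomm\'e's proof does the same by checking the identity at infinitely many values of $x$). Your preliminary telescoping reduction to the single step $y=x+r$ is sound, though strictly speaking unnecessary: the Vandermonde expansion applied with $k:=y-x$ already shows that $y-x$ itself divides $f_n(y)-f_n(x)$, which is the slightly stronger form the paper proves.
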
 

\begin{proof}
\begin{claim}\label {fnk}
$\frac{f_n(k)}{k}={\frac{lcm(n)}{k}}\cdot {k\choose n}$ is an integer for every  $k\in \Z\setminus \{0\}$. 
\end{claim}
\noindent {\bf Proof of Claim \ref{fnk}.}
Suppose  $k\geq n$. 
We have ${\frac{lcm(n)}{k}}\cdot {k\choose n}:= \frac{lcm(n)}{n} \cdot \frac{(k-1)\cdots (k-n+1)} {n-1!}=\frac {lcm(n)}{n}\cdot {{k-1}\choose {n-1}}$. By definition $lcm(n)$ is a multiple of $n$ and the binomial coefficient ${{k-1}\choose {n-1}}$ is an integer. This yields the result.  If $k\in [0, n-1]$ then $f_n(k)=0$ hence the property holds. If $k<0$ set $k' := -k$. Then ${\frac{lcm(n)}{k}}\cdot {k\choose n}= \frac{lcm(n)}{n} \cdot \frac{(-k'-1)\cdots (-k'-n+1)} {n-1!} =\frac{lcm(n)}{n} \cdot (-1)^{n-1}\cdot \frac{(k'+1)\cdots (k'+n-1)} {n-1!}= \frac{lcm(n)}{n} \cdot (-1)^{n-1}\cdot {{k'+n-1}\choose {n-1}}$. Again, this number is an integer. \hfill $\Box$
\begin{claim}\label {fn}
$k$ divides $f_n(x+k)-f_n(x)$ for all  $n\in \N$, $k\in \Z\setminus \{0\}$ and $x\in \Z$. 
\end{claim}
\noindent {\bf Proof of Claim \ref{fn}.}

Suppose first that  $x\in \N$.

We use induction on $n$ and on $x$. 

$\bullet$ $n:=0$.  In this case, $f_n$ is identically $0$, hence  $f_n(x+k)-f_n(x)=0$  and the property holds. 

$\bullet$ $n=1$.  Then   $lcm(1):=1$, $f_n(x+k)=x+k$ hence $f_n(x+k)-f_n(x)=k$ and the property holds. 

$\bullet$ $n>1$. Suppose that the property holds for $n-1$. We proceed by induction on $x$. 

$\bullet$ $x=0$.  We have  $f_n(x+k)-f_n(x)= f_n(k)-f_n(0)= f_n(k)$. According to Claim \ref{fnk} this quantity is divisible by $k$ for every non-zero $k$.   

$\bullet$ $x>0$. 

We use the following form of Pascal identity:

$$ {X \choose n}= {{X -1}\choose n}+ {{X-1} \choose {n-1}}.$$
For $X:=x+ k$ this yields:
$$ {x+k \choose n}= {{x+k -1}\choose n}+ {{x+k-1} \choose {n-1}}.$$
 
For $X:= x$ this yields: 
$$ {x \choose n}= {{x -1}\choose n}+ {{x-1} \choose {n-1}}.$$
  
Hence, via a substraction, we have: 
$$\frac{1}{k}\cdot (f_n(x+k)-f_n(x))=\frac{1}{k}\cdot (f_n(x+k-1)-f_n(x-1))+\frac{a}{k}\cdot (f_{n-1}(x+k-1)-f_{n-1}(x-1)).$$ where $a.lcm(n-1)=lcm(n)$. 

 Via the induction on $x$,  the first term of the sum is an integer, whereas the second is an integer via the induction on $n$.

To complete the proof, let $x<0$. Set $x':= -x$. Then 

$$f_n(x+k)-f_n(x)=f_n(-x'+k)-f_n(-x')= 
 (-1)^{n}\cdot ( f_{n} (x'-k+n-1)-f_{n}(x'+n-1).$$
 
 Since $x'>0$ this quantity is   divisible by $-k$ that is $k$. 
 \hfill $\Box$
 
 With Claim \ref {fn} the conclusion of the  lemma follows. 
 
\end{proof}

A much shorter proof was discovered by C.Delhomm\'e \cite{delhomme1}.  He obtains  the fact that 
 $f_n(x+k)-f_n(x)$ is divisible  by $k$ from the equalities: 
 
 \begin{equation} 
 {{x+k}\choose n}-{x \choose n}= \sum_{i=1, \dots, n}{x\choose {n-i} }\cdot{k\choose i}= \sum_{i=1, \dots n}{x\choose {n-i}} \frac{k}{i}{{k-1}\choose {i-1}}.
 \end{equation}
 
 Indeed, $\frac{lcm(n)}{i}$ is an integer for every $i=1,\dots,  n$.  
To prove that the first  equality holds, it suffices to check that its holds for infinitely many values of $x$. So suppose $x, k\in \N$. In this case, the left hand side  counts  the number of $n$-element subsets $Z$ of a $x+ k$-element set union of  two disjoints set $X$ and $K$ of size $x$ and $k$, each $Z$ meeting $K$.  Dividing this collection of  subsets according to the size of their intersection with $K$ yields the right hand size of this equality.

\begin{lemma} \label{preservecongruence}Polynomial  functions from $\Z$ to $\Z$ which preserve all congruences are finite linear sums with integer coefficients of polynomial functions of the form $lcm(k)\cdot {x\choose k}$. 
\end{lemma}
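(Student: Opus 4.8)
The plan is to combine the integrality description from Lemma \ref{integervalues} with the divisibility-forcing argument of Lemma \ref{smalltrick} and the positive result of Lemma \ref{wasdifficult}. Let $P:\Z\to\Z$ be a polynomial function of degree $n$ that preserves all congruences. By Lemma \ref{integervalues} we may write $P = \sum_{k=0}^{n} \mu_k \cdot \binom{X}{k}$ with $\mu_k\in\Z$. The claim is that in fact each $\mu_k$ is a multiple of $lcm(k)$, so that writing $\mu_k = c_k\cdot lcm(k)$ exhibits $P = \sum_{k=0}^n c_k \cdot \big(lcm(k)\binom{X}{k}\big)$ as the desired integer combination; the summands preserve all congruences by Lemma \ref{wasdifficult}, which is a consistency check but not needed for the decomposition itself.

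The heart of the matter is forcing the coefficients. I would argue by induction on $k$. The base cases $k=0,1$ are immediate since $lcm(0)=lcm(1)=1$. Suppose $\mu_0,\dots,\mu_{k-1}$ are multiples of $lcm(0),\dots,lcm(k-1)$ respectively. Consider the ``tail'' polynomial $R := P - \sum_{j<k}\mu_j\binom{X}{j} = \sum_{j\ge k}\mu_j\binom{X}{j}$. Each polynomial $\mu_j\binom{X}{j}$ with $j<k$ preserves all congruences by Lemma \ref{wasdifficult} (being a multiple of $lcm(j)\binom{X}{j}$), and a difference of congruence-preserving maps preserves congruences; hence $R$ preserves all congruences. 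Now $R$ vanishes at $0,1,\dots,k-1$ because $\binom{i}{j}=0$ whenever $i<j$, and $R(k) = \mu_k$ since $\binom{k}{j}=0$ for $j>k$ and $\binom{k}{k}=1$. Exactly the computation of Lemma \ref{smalltrick} now applies: for each $i\in\{0,\dots,k-1\}$, preservation of $\equiv_{k-i}$ gives that $R(k)-R(i) = \mu_k$ is divisible by $k-i$, so $\mu_k$ is divisible by each of $1,2,\dots,k$, i.e. by $lcm(k)$. This completes the induction and hence the proof.

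The step I expect to be the only real content is recognizing that the tail $R$ still preserves all congruences — this is where Lemma \ref{wasdifficult} is genuinely used — after which Lemma \ref{smalltrick}'s trick (evaluating at $0,\dots,k-1,k$ and reading off divisibility from the congruences $\equiv_{k-i}$) transfers verbatim. Everything else (the basis change, the vanishing of binomials below the diagonal) is routine. One could alternatively phrase the whole argument as a single application of Lemma \ref{smalltrick} to the leading term after peeling off lower-degree pieces, but the inductive packaging above makes the bookkeeping cleanest.
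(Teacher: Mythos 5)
Your proof is correct and follows essentially the same route as the paper's: express $P$ in the binomial basis via Lemma \ref{integervalues}, induct on the coefficients, use Lemma \ref{wasdifficult} to certify that the already-determined part preserves congruences, and force divisibility of the next coefficient by evaluating at $0,\dots,k$ as in Lemma \ref{smalltrick}. The only difference is cosmetic: you peel off the low-order terms and run the induction bottom-up, whereas the paper truncates the leading term and inducts on the degree.
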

\begin{proof}
We adapt the proof of Lemma \ref{integervalues}. 
Let $P$ be  a polynomial from $\Z$ to $\Z$. According to Lemma \ref{integervalues}
$$P:= \lambda_0+ \dots+ \lambda_k \cdot  {X\choose k}+ \dots+ \lambda_n \cdot {X\choose n}$$  where  $\lambda_0, \dots, \lambda_n$ are integers.
Suppose that $P(k)-P(k')$ is a multiple of $k-k'$ for all  $k, k':=1, \dots, n$. We prove by induction on the degree that $\lambda_k$ is a multiple of $lcm(k)$ for each $k:=1, \dots, n$. 
Let $$Q:= \lambda_0+ \dots+ \lambda_k \cdot  {X\choose k}+ \dots+ \lambda_{n-1}\cdot {X\choose {n-1}}.$$
We have  $Q(k)=P(k)$ for all $k\leq n-1$. Hence, $Q$ satisfies the property,  induction applies and yields  that all $\lambda_k$ are integer  multiples of $lcm(k)$ for $k\leq n-1$. Now, $P(n)= Q(n)+ 
\lambda_{n}\cdot  [{X\choose n}](X= n)$. Since $\lambda_k$ is a multiple of $lcm(k)$ for $k\leq n-1$, it follows from Lemma \ref{wasdifficult} that  $Q$ preserves all congruences, in particular $Q(n)-Q(k)$ is a multiple of $n-k$; since $P(n)-P(k)$ is a multiple of $n-k$,  $P(n)-Q(n)=\lambda_{n}\cdot  [{X\choose n}](X= n)= \lambda_{n}$ is a multiple of $n-k$. Hence $\lambda_n$ is a multiple of $1, \dots, n$. Proving that $\lambda_n$ is a multiple of $lcm({n})$. 
\end{proof}

The proof yields:

\begin{corollary}
If a polynomial of degree $n$ preserves all congruences of the form $\equiv_k$ for $k:=1, \dots n$, it preserves all congruences. 
\end{corollary}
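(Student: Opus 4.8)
The plan is to extract this corollary directly from the inductive argument used to prove Lemma \ref{preservecongruence}, which is already set up to be more quantitative than its statement suggests. Indeed, the hypothesis there is exactly that $P(k)-P(k')$ is a multiple of $k-k'$ for all $k,k' \in \{1,\dots,n\}$, and this is precisely what it means for $P$ to preserve $\equiv_m$ on the representatives $1,\dots,n$ when $m = k-k' \leq n-1$; but since $P$ is a polynomial of degree $n$ and we are told it preserves $\equiv_k$ for all $k = 1,\dots,n$, that hypothesis is certainly met. So first I would observe that the $n+1$ conditions ``$P$ preserves $\equiv_1,\dots,\equiv_n$'' imply, a fortiori, that $P(k)-P(k')$ is divisible by $k-k'$ whenever $1 \leq k' < k \leq n$, since then $0 < k-k' \leq n-1 < n$.

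Next I would run the induction from Lemma \ref{preservecongruence} to conclude that, writing $P = \lambda_0 + \lambda_1\binom{X}{1} + \dots + \lambda_n\binom{X}{n}$ in the binomial basis, each coefficient $\lambda_k$ is an integer multiple of $lcm(k)$ for $k = 0,1,\dots,n$. By Lemma \ref{wasdifficult}, each polynomial $lcm(k)\cdot\binom{x}{k}$ preserves all congruences, and preservation of a fixed equivalence relation is closed under forming integer linear combinations (if $f_i(x)\equiv f_i(y)\ (\rho)$ for each $i$, then $\sum_i c_i f_i(x) \equiv \sum_i c_i f_i(y)\ (\rho)$ since $\rho$ is a congruence of $(\Z,+)$ and closed under the scalar maps, which are themselves congruence-preserving). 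Hence $P$, being such a combination, preserves every congruence $\equiv_r$, $r \in \N$, which is the full family of congruences of $(\Z,+)$.

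I do not expect a genuine obstacle here: the corollary is essentially a re-reading of the proof of Lemma \ref{preservecongruence}, the only subtlety being the bookkeeping that ``$P$ preserves $\equiv_k$ for $k=1,\dots,n$'' is at least as strong as the divisibility hypothesis actually used in that proof. The one point worth stating carefully is why it suffices to test the relations $\equiv_k$ on the single block of representatives $\{1,\dots,n\}$ (equivalently, on any $n$ consecutive integers): this is because for a polynomial $P$ the map $x \mapsto P(x+r)-P(x)$ is again a polynomial, so if it vanishes modulo $r$ at $r$ consecutive points — in particular on enough points when $r \leq n$ — one still needs the induction rather than a naive interpolation argument, exactly as flagged in the proof of Lemma \ref{onepolynomialextension}(a). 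But since we are handed preservation of $\equiv_k$ outright (not merely on a finite set), this caveat does not even arise, and the corollary follows immediately.
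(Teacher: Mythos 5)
Your proposal is correct and is essentially the paper's own argument: the paper gives no separate proof, stating only that ``the proof yields'' the corollary, i.e.\ exactly the re-reading of the induction in Lemma \ref{preservecongruence} followed by Lemma \ref{wasdifficult} and closure of congruence preservation under integer linear combinations that you spell out. One small bookkeeping point (present in the paper's own indexing as well): restricting to $1\leq k'<k\leq n$ only yields differences up to $n-1$, so to extract the factor $n$ in $lcm(n)$ for the top coefficient $\lambda_n$ you should also use the point $0$, namely $\lambda_n=P(n)-Q(n)=(P(n)-P(0))-(Q(n)-Q(0))$ with $P(0)=Q(0)=\lambda_0$ and $P$ preserving $\equiv_n$.
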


\begin{lemma}\label{chinese} Every map $f$ from a finite subset $A$ of $\Z$ and values in $\Z$ which preserves the congruences extends to every $a\in \Z\setminus A$ to a map with the same property. 
\end{lemma}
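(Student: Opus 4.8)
The plan is to reduce the extension problem to a finite congruence-solving problem and then invoke the Chinese Remainder Theorem. Fix a finite $A\subseteq\Z$ and a map $f:A\to\Z$ preserving all congruences, and fix $a\in\Z\setminus A$. We must find an integer $v$ so that setting $f(a):=v$ keeps the extended map congruence-preserving; since the only new constraints involve pairs $(a,b)$ with $b\in A$, the requirement is exactly that $v\equiv f(b)\ (\bmod\ m)$ whenever $a\equiv b\ (\bmod\ m)$ for some $m$, equivalently $v\equiv f(b)\ (\bmod\ \gcd(a-b, N))$ for each $b\in A$, where it suffices to work modulo any fixed multiple $N$ of all the differences that occur — e.g. $N:=\mathrm{lcm}\{\,|a-b| : b\in A\,\}$, or even just take each modulus to be $d_b:=a-b$ itself (up to sign). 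So the system to solve is the simultaneous congruence system $v\equiv f(b)\ (\bmod\ d_b)$, $b\in A$.

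First I would record the compatibility condition: for two elements $b,b'\in A$, the Chinese Remainder Theorem over the (non-coprime) moduli $d_b$ and $d_{b'}$ has a solution iff $f(b)\equiv f(b')\ (\bmod\ \gcd(d_b,d_{b'}))$. Here is where the hypothesis on $f$ is used: let $g:=\gcd(a-b,a-b')$; then $g$ divides $(a-b)-(a-b')=b'-b$, so $b\equiv b'\ (\bmod\ g)$, and since $f$ preserves the congruence $\equiv_g$ on $A$ we get $f(b)\equiv f(b')\ (\bmod\ g)$, which is exactly the pairwise compatibility condition. The general (finite, non-coprime) Chinese Remainder Theorem states that a system $v\equiv c_b\ (\bmod\ d_b)$, $b\in A$, is solvable precisely when it is pairwise solvable, i.e. when $c_b\equiv c_{b'}\ (\bmod\ \gcd(d_b,d_{b'}))$ for all $b,b'$; so pairwise compatibility, just established, yields a solution $v$. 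Setting $f(a):=v$ then makes the extension preserve every congruence: any new instance $a\equiv b\ (\bmod\ m)$ forces $m\mid a-b=d_b$, hence $v\equiv f(b)\ (\bmod\ d_b)$ gives $v\equiv f(b)\ (\bmod\ m)$; instances not involving $a$ hold by hypothesis.

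The only genuine point requiring care — and the place I'd expect to spend a sentence justifying — is the passage from pairwise to simultaneous solvability of a non-coprime congruence system; this is the standard generalized Chinese Remainder Theorem and can be proved by induction on $|A|$, merging two congruences at a time (the merge of $v\equiv c\ (\bmod\ d)$ and $v\equiv c'\ (\bmod\ d')$ into a single congruence modulo $\mathrm{lcm}(d,d')$ is legitimate exactly under $c\equiv c'\ (\bmod\ \gcd(d,d'))$, and one then checks the new merged congruence stays compatible with the remaining ones using $\gcd(\mathrm{lcm}(d,d'),e)=\mathrm{lcm}(\gcd(d,e),\gcd(d',e))$). Everything else is bookkeeping about which new constraints the single new value must satisfy, and the verification that no old constraint is disturbed.
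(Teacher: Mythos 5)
Your proof is correct and takes essentially the same route as the paper: the paper's own proof is a one-line appeal to the Chinese remainder theorem in the form of Kaarli's extension result for arithmetical lattices (Corollary \ref{oneextension}), and your argument is exactly the concrete instantiation of that result in $\Z$ --- reduce to the system $v\equiv f(b)\pmod{a-b}$, verify pairwise compatibility via congruence preservation (since $\gcd(a-b,a-b')$ divides $b-b'$), and invoke the generalized non-coprime Chinese remainder theorem. Your version is more self-contained, spelling out in $\Z$ what the paper delegates to the abstract lattice-theoretic statement.
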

\begin{proof}
This  follows from  the Chinese remainder theorem (see Corollary \ref {oneextension} in the next section). 
\end{proof}
Lemma \ref{onepolynomialextension} becomes:
\begin{lemma} $(a)$ Every map $f$ from a finite subset $A$ of $\Z$ and  values in $\Z$ which preserves all congruences extends to a polynomial function preserving all congruences. $(b)$ Every map  $f:\Z :\rightarrow \Z$ which preserves all congruences is of the form  $$\sum_{n=0, \infty} a_{n}\cdot P_{n}$$ where each $a_n$ is an integer multiple of $lcm(n)$.
\end{lemma}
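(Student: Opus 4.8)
The plan is to run the interpolation scheme of Lemma~\ref{onepolynomialextension}, adding the one extra divisibility ingredient that separates congruence-preserving maps from arbitrary ones. Everything hinges on a preliminary observation that I would establish first: \emph{if $P$ is a polynomial of degree at most $n$ taking integer values, then $a\cdot lcm(n)\cdot P$ preserves all congruences for every $a\in\Z$, and so does any finite sum of such polynomials}. This is immediate from the earlier results: by Lemma~\ref{integervalues} write $P$ as an integer combination of the ${X\choose j}$ with $j\le n$; since $lcm(j)$ divides $lcm(n)$ for $j\le n$, multiplying through by $lcm(n)$ exhibits $lcm(n)\cdot P$ as an integer combination of the polynomials $lcm(j)\cdot{X\choose j}$, each of which preserves all congruences by Lemma~\ref{wasdifficult}; and sums and integer multiples of congruence-preserving operations again preserve all congruences. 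In particular each $P_n$ has degree $n$ and integer values, so $a_n\cdot P_n$ preserves all congruences as soon as $lcm(n)\mid a_n$, and so does every partial sum $\sum_{n\le N}a_n\cdot P_n$.

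For part $(b)$ I would keep the very setup of Lemma~\ref{onepolynomialextension}$(b)$: the same intervals $A_n$, the same sequence $(b_n)$ exhausting $\Z$, with $P_n$ vanishing on $A_n$, $\{b_n\}=A_{n+1}\setminus A_n$, and (one checks) $P_n(b_n)=(-1)^n$. The coefficients $a_n$ are defined by recursion so that, writing $g_n:=\sum_{i\le n}a_i\cdot P_i$, each $g_n$ agrees with $f$ on $A_{n+1}$; the only new demand is that $lcm(n)\mid a_n$. At stage $n$ the partial sum $g_{n-1}$ already interpolates $f$ on $A_n$ and, by the preliminary observation, preserves all congruences; hence for each $b\in A_n$ the integer $f(b_n)-g_{n-1}(b_n)$, being $\bigl(f(b_n)-f(b)\bigr)-\bigl(g_{n-1}(b_n)-g_{n-1}(b)\bigr)$, is divisible by $b_n-b$. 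The key elementary computation is that as $b$ ranges over $A_n$ these differences run, up to sign, through $1,2,\dots,n$ (for $n=2k$ they are $k-\{-k,\dots,k-1\}=\{1,\dots,2k\}$; for $n=2k+1$ they are $-(k{+}1)-\{-k,\dots,k\}=\{-1,\dots,-(2k{+}1)\}$), so $lcm(n)$ divides $f(b_n)-g_{n-1}(b_n)$. Since $P_n(b_n)=(-1)^n$, the choice $a_n:=(-1)^n\bigl(f(b_n)-g_{n-1}(b_n)\bigr)$ is then an integer multiple of $lcm(n)$, and $g_n$ interpolates $f$ on $A_{n+1}$. Letting $x$ be arbitrary and $N$ with $x\in A_{N+1}$, the vanishing of $P_i$ on $A_j$ for $i\ge j$ gives $f(x)=g_N(x)=\sum_{n=0}^{\infty}a_n\cdot P_n(x)$, which is the assertion of $(b)$ (and of Theorem~\ref{main}).

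For part $(a)$ I would deduce it from $(b)$. Given $f$ on a finite set $A$ preserving all congruences, first extend it to all of $\Z$ by running Lemma~\ref{chinese} along an enumeration of $\Z\setminus A$; the resulting $\widehat f:\Z\to\Z$ still preserves all congruences, because any single instance of congruence-preservation involves only two integers, both present at a finite stage. Apply $(b)$ to get $\widehat f=\sum_n a_n\cdot P_n$ with $lcm(n)\mid a_n$, choose $N$ with $A\subseteq A_{N+1}$, and take the polynomial $\sum_{n\le N}a_n\cdot P_n$: by the preliminary observation it preserves all congruences, and it coincides with $\widehat f$, hence with $f$, on $A$.

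The step I expect to be the real content is the divisibility $lcm(n)\mid f(b_n)-g_{n-1}(b_n)$ inside $(b)$: it is exactly here that preservation of congruences is used, and it combines the (easy once stated) fact that $lcm(n)$ times an integer-valued degree-$n$ polynomial preserves all congruences with the bookkeeping, inherited from Lemma~\ref{onepolynomialextension}, that the partial sums $g_{n-1}$ simultaneously interpolate $f$ on $A_n$. By contrast the reduction in $(a)$ and the exhaustion argument are routine.
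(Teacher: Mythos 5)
Your proposal is correct and follows essentially the same route as the paper: extend $f$ via Lemma~\ref{chinese}, rerun the interpolation of Lemma~\ref{onepolynomialextension} along the intervals $A_n$, and obtain $lcm(n)\mid a_n$ by the argument of Lemma~\ref{smalltrick} (the difference of $f$ and the previous partial sum vanishes on $A_n$, preserves $\equiv_1,\dots,\equiv_n$, and the gaps $b_n-b$ exhaust $\pm\{1,\dots,n\}$). Your preliminary observation and the explicit check that the partial sums $g_{n-1}$ preserve all congruences merely spell out steps the paper leaves implicit, so this is the same proof in fuller detail.
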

\begin{proof}
We  extend $A$ to a finite  interval $\overline A$. With  Lemma \ref{chinese}, we extend $f$ to $\overline  A$ to a map $\overline f$ which preserves all congruences. The same proof   as in  Lemma \ref {onepolynomialextension} applies. We only need to check that $a_n$ is a multiple of $lcm (n)$ for each $n\in \N$. We do that by induction. We suppose $a_i$ is a multiple of $lcm (i)$ for each $i<n$. We need to prove that $a_n$ is a multiple of $lcm(n)$. The map  $\overline f_{\restriction A_{n+1}}$ preserves the congruences  $\equiv_1, \dots, \equiv_n$, hence, by the proof of Lemma \ref{smalltrick},   $a_n$ is a multiple of $lcm (n)$. 
\end{proof}

Theorem \ref{main} and Theorem \ref{thm:extension} follow.

\section{Further developments}

\subsection{The lattice of congruences}
Let $\Eqv(A)$ be the set of equivalence relations on a set $A$. For a subset $L$ of $\Eqv(A)$,  let $\Pol(L)$, resp. $\Pol^{1} (L)$,  be the set  finitary operations, resp. unary operations,  on $A$ which preserve all members of $R$. Let  $F$ be a set of finitary operations on $A$, the set of equivalence relations preserved by all members of $F$ is the set of congruences of the  algebra  $\mathcal A_F:=(A, F)$, we denote it by  $\Cong(\mathcal A_F)$. 
The pair of maps $L\rightarrow Pol (L)$, $Cong(\mathcal A_F) \leftarrow F$ defines a Galois correspondence between the set $Eq(A)$ of equivalence relations and the set $O_A$ of operations on $A$. This leeds to two problems: 

\noindent 1) Describe the sets of the form $Cong(\mathcal A_F). \\
2)$  Describe the sets of the form $Pol (L)$. 

According to a result of A.Mal'tsev, $\Pol (Cong(\mathcal A_F))$ is determined by its unary part $\Pol^{1}(Cong(\mathcal A_F))$. 

According to  \cite {BKKR69a, BKKR69b}, if $L$ is a set of equivalences on a \emph{finite} set $A$, and $F:= Pol (L)$, then  $\Cong(\mathcal A_F)$   is made of all  equivalence relations definable by primitive positive formulas  from  $L$ (see \cite{snow} for an easy to read presentation). 

If $F$ is a set of maps on $A$, the  set  $\Cong(\mathcal A_F)$ is a subset of $\Eqv(A)$ which is closed under intersection and union of chains. Ordered by inclusion this is an algebraic lattice. It was show by Gr\"atzer and Schmidt \cite{gratzer-schmidt2} that every algebraic lattice is isomorphic to the congruence lattice of some algebra.  

One of the oldest unsolved problem in  universal algebra is "the finite lattice representation problem":

\begin{problem}
Is  every finite lattice  isomorphic to the congruence lattice of  a \emph{finite} algebra? 
(see \cite{palfy1, palfy2}). 
\end{problem}

See \cite{gratzer} for an overview.  The first step in the positive direction is the fact that every finite lattice  embeds as  a sublattice of the lattice of equivalences on a finite set,  a famous and non trivial result of Pudlak and Tuma \cite{pudlak-tuma}. 
Say that a lattice $L$ is \emph{representable} as a congruence lattice if it is isomorphic to the lattice of congruences of some algebra and say that it is \emph{strongly representable} if every  sublattice $L'$ of some $\Eqv(A)$ (with the same $0$ and $1$ elements) which is isomorphic to $L$ is the lattice of congruences of some algebra on $A$. As shown in \cite{quackenbush-wolk}, not every representable lattice is strongly representable. For an integer $n$, let $M_n$ be the lattice made of a bottom and a top element and an $n$-element antichain. Let $M_3$ be the lattice made of a $3$-element antichain and a top and bottom. This lattice is representable (as the set of congruences of the group $\Z/2\cdot\Z\times \Z/2\cdot\Z$) but not strongly representable. We may find sublattices $L$ of $\Eqv (A)$ isomorphic to $M_3$ such that the only unary maps preserving $L$ are the identity and constants. Hence, the congruence lattice of the algebra on $A$ made of these unary maps is $\Eqv (A)$. The sublattices $L$ of $\Eqv(A)$ such that $\Cong(\mathcal A_L)=\Eqv(A)$ (where $\mathcal A_L:=(A,\Pol^{1}(L))$)  are said to be \emph{dense};  the fact that,  as a lattice,  $M_3$ has a dense representation in every $\Eqv(A)$ with $A$ finite on at least five elements, amounting to a Z\'adori's result \cite{Zad83},  appears  in \cite{demeo} as Proposition 3.3.1 on page 20. For an integer n, let $M_n$ be the lattice made of a bottom and a top element and an $n$-element antichain. It is not known if  $M_n$ is representable for each  integer $n$ (it is easy to see that $M_n$ is representable if $n=q+1$ where $q$ is a power of a prime. The case $n=7$ was solved by W.Feit, 1983). Concerning the representability of some type of lattices, note that  every finite distributive lattice is  isomorphic to the lattice of congruences of a finite lattice (Dilworth, \cite{gratzer-schmidt}), hence is representable. In fact, it is  strongly representable  \cite{quackenbush-wolk}. As we will see in Corollary \ref{cor:representation} it is representable as an arithmetic lattice.

\subsection{Arithmetical lattices}

The composition of two binary relation $\theta$ and $\rho$ on  a set $A$ is the binary relation,  denoted by $\theta\circ \rho$,  and defined  by:

$$\theta\circ \rho:= \{(x,y)\in A\times A : (x, z)\in \rho\;  \text {and}\;  (z, y)\in \theta \;  \text {for some}\;  z\in A\}$$
Let $\Eqv(A)$ be he lattice of equivalence relations  on $A$. A sublattice $L$ of $\Eqv(A)$ is \emph{arithmetical} (see \cite {pixley})  if it is distributive and pairs of members of $L$ commute with respect to composition, that is 

\begin{equation}
\rho\circ \theta=\theta \circ \rho\;  \text{for every}\; \theta, \rho\in L.
\end{equation}

This second condition amounts to the fact that the join $\theta\vee \rho$ of $\theta$ and $\rho$ in the lattice $L$ is their composition.

A basic example of arithmetic lattice is the lattice of congruences of $(\Z, +)$. The fact that pairs of congruences commute is easy (and interesting). If $\theta$ and $\rho$ are two congruences, take $(x,y)\in \rho\circ \theta$. Then, there is $z\in \Z$ such that $(x,z)\in \theta$ and $(z,y)\in \rho$. Let $r, t\in \N$ such that $\theta= \equiv_r$ and $\rho= \equiv_t$ , then there are $k,\ell\in \Z$ such that $z=x+k.r$ and $y=z+\ell.t$. Set $z':=x+\ell.t$ then $x\equiv_t z'\equiv_r y$ hence $(x, y)\in \equiv_r\circ \equiv_t= \theta\circ \rho$. Thus $\rho\circ \theta= \theta\circ \rho$ as claimed. 
 
As it is well known, if  $\theta$ and $\rho$ are two congruences,  $\theta= \equiv_{t}$ and $\rho=\equiv_{r}$ with $r, t\in \N$,  then  $\theta\vee \rho=\equiv_{ lcd \{t,r\}}$   
whereas, $\theta\wedge\rho= \equiv_{lcm\{t,r\}}$. Distributivity follows.

As it is well known (see \cite{pixley}), arithmetic lattices can be characterized in terms of the \emph{Chinese remainder condition}.

We say that a sublattice $L$ of $\Eqv(A)$ satisfies the \emph{Chinese remainder condition} if:

for each finite set of equivalence relations $\theta_1, \dots \theta_n$ belonging to $L$ and elements $a_1, \dots, a_n\in A$, the system:
\begin{equation}
x\equiv a_i (\theta_i), i=1, \dots, n
\end{equation} 

is solvable iff for all $1\leq i,j\leq n$

\begin{equation}
a_i\equiv a_j  (\theta_i\vee \theta_j).
\end{equation} 
Recall the following classical result:
\begin{theorem} A sublattice $L$ of $\Eqv(A)$ is arithmetical iff it satisfies  the Chinese remainder condition.
\end{theorem}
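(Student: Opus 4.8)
The plan is to prove both directions of the equivalence separately, treating the Chinese remainder condition as the bridge between the order-theoretic structure of $L$ and the combinatorics of solving simultaneous congruences.

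\medskip

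\noindent\textbf{From arithmetical to Chinese remainder.} Assume $L$ is distributive and permuting. The necessity of the compatibility condition $a_i\equiv a_j\,(\theta_i\vee\theta_j)$ is trivial: any solution $x$ satisfies $a_i\equiv x\equiv a_j$ modulo $\theta_i\vee\theta_j$. For sufficiency I would argue by induction on $n$. The base case $n=1$ is immediate. For the inductive step, the key observation is that because $L$ permutes, $\theta_i\vee\theta_j=\theta_i\circ\theta_j$, so the hypothesis $a_i\equiv a_j\,(\theta_i\vee\theta_j)$ says precisely that there is an element lying $\theta_i$-close to $a_i$ and $\theta_j$-close to $a_j$ — i.e. the pairwise systems are individually solvable. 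One then uses the induction hypothesis to produce a solution $b$ of the first $n-1$ congruences, and must patch it against the $n$-th. The distributive law enters here: one needs $b\equiv a_n$ modulo $\left(\bigwedge_{i<n}\theta_i\right)\vee\theta_n$, which by distributivity equals $\bigwedge_{i<n}(\theta_i\vee\theta_n)$, and the latter holds because $b\equiv a_i\,(\theta_i)$ and $a_i\equiv a_n\,(\theta_i\vee\theta_n)$ give $b\equiv a_n\,(\theta_i\vee\theta_n)$ for each $i<n$. Then permutability of $\bigwedge_{i<n}\theta_i$ with $\theta_n$ lets one find $x$ with $x\equiv b\,(\bigwedge_{i<n}\theta_i)$ and $x\equiv a_n\,(\theta_n)$; such an $x$ solves the whole system.

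\medskip

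\noindent\textbf{From Chinese remainder to arithmetical.} Conversely, assume $L$ satisfies the Chinese remainder condition. To get permutability, take $\theta,\rho\in L$ and $(x,y)\in\rho\circ\theta$; I want $(x,y)\in\theta\circ\rho$, i.e. an element $z'$ with $x\equiv z'\,(\rho)$ and $z'\equiv y\,(\theta)$. Set up the two-congruence system $w\equiv x\,(\rho)$, $w\equiv y\,(\theta)$. Its compatibility condition is $x\equiv y\,(\theta\vee\rho)$, which holds because $(x,y)\in\rho\circ\theta\subseteq\theta\vee\rho$; hence a solution $z'$ exists, giving $(x,y)\in\theta\circ\rho$. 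By symmetry $\theta\circ\rho=\rho\circ\theta$. For distributivity one argues that $\Eqv(A)$ is always modular, so it suffices to exclude the pentagon, or — cleaner — to verify the distributive identity directly. Given $\theta,\rho,\sigma\in L$ one always has $\theta\wedge(\rho\vee\sigma)\ge(\theta\wedge\rho)\vee(\theta\wedge\sigma)$; for the reverse inclusion take $(x,y)$ in the left side, so $x\equiv y\,(\theta)$ and, using permutability, $x\equiv z\,(\rho)$, $z\equiv y\,(\sigma)$ for some $z$. The plan is to solve a suitable system — something like $w\equiv x\,(\theta\wedge\rho)$ and $w\equiv y\,(\theta\wedge\sigma)$, whose compatibility reduces to $x\equiv y\,(\theta)$ together with the relations just displayed — to produce $w$ witnessing $(x,y)\in(\theta\wedge\rho)\vee(\theta\wedge\sigma)$.

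\medskip

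\noindent The step I expect to be the main obstacle is the distributive direction of the second implication: getting the compatibility hypotheses of the Chinese remainder condition to line up exactly with the data $x\equiv y\,(\theta)$, $x\equiv z\,(\rho)$, $z\equiv y\,(\sigma)$ requires choosing the right system and may need an auxiliary solvable system (possibly with three congruences, or an intermediate element) rather than the naive two-congruence one; care is needed because $\theta\wedge\rho$ and $\theta\wedge\sigma$ need not permute with each other a priori, only members of $L$ do, and $\theta\wedge\rho\in L$ since $L$ is a sublattice — that last point is what makes the argument go through. The rest is routine once the systems are set up correctly.
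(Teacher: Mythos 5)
The paper offers no proof of this statement; it is labelled ``Recall the following classical result'' and attributed to the literature on arithmetical varieties (Pixley), so there is nothing internal to compare you against. Judged on its own terms, your first direction is correct and is the standard argument: necessity of the compatibility condition is trivial, and for sufficiency the induction works because $\bigwedge_{i<n}\theta_i$ belongs to the sublattice $L$, hence permutes with $\theta_n$, and the finite distributive law $\left(\bigwedge_{i<n}\theta_i\right)\vee\theta_n=\bigwedge_{i<n}(\theta_i\vee\theta_n)$ converts the pairwise hypotheses into solvability of the patching system. The permutability half of the converse is also correct.

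The genuine gap is exactly where you suspected it, and your proposed repair does not yet close it. The two-congruence system $w\equiv x\,(\theta\wedge\rho)$, $w\equiv y\,(\theta\wedge\sigma)$ is circular: its Chinese-remainder compatibility condition is $x\equiv y\,\bigl((\theta\wedge\rho)\vee(\theta\wedge\sigma)\bigr)$, which is precisely the inclusion you are trying to establish, so nothing ``reduces to $x\equiv y\,(\theta)$ together with the relations just displayed.'' The correct move is the three-congruence system
$$w\equiv x\,(\rho),\qquad w\equiv y\,(\sigma),\qquad w\equiv x\,(\theta),$$
whose pairwise compatibilities are all consequences of the hypotheses: $x\equiv y\,(\rho\vee\sigma)$ holds because $(x,y)\in\rho\vee\sigma$; $x\equiv x\,(\rho\vee\theta)$ is trivial; and $y\equiv x\,(\sigma\vee\theta)$ holds because $(x,y)\in\theta\subseteq\sigma\vee\theta$. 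A solution $w$ then satisfies $(x,w)\in\theta\wedge\rho$ and, since $(w,x)\in\theta$ and $(x,y)\in\theta$, also $(w,y)\in\theta\wedge\sigma$, so $(x,y)\in(\theta\wedge\rho)\vee(\theta\wedge\sigma)$. Note that the intermediate element $z$ with $x\,\rho\,z\,\sigma\,y$ is not needed at all, and that your worry about $\theta\wedge\rho$ and $\theta\wedge\sigma$ permuting evaporates because the system above only involves $\rho,\sigma,\theta\in L$ themselves. With this system substituted, your outline becomes a complete proof.
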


Kaarli \cite{kaarli} obtained the following two results:
\begin{corollary}\label {oneextension}
If $L$ is arithmetical (and stable by arbitrary meets) then every partial function $f: B\rightarrow A$ where $B$ is a finite subset of $A$ which preserves all members of $L$ extends to any element $z$ of $A\setminus B$ to a function with the same property. 
\end{corollary}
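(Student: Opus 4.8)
The plan is to turn the problem of finding an admissible value $c=f(z)$ into a single finite system of congruences over $L$, and then to invoke the Chinese remainder condition, which holds because $L$ is arithmetical (preceding theorem).

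First I would spell out what ``$f\cup\{(z,c)\}$ preserves every $\theta\in L$'' means: beyond the constraints already satisfied on $B$, it requires exactly that $c\equiv f(b)\,(\theta)$ whenever $b\in B$, $\theta\in L$ and $z\equiv b\,(\theta)$. This is the point where stability under arbitrary meets is used. For each $b\in B$ set $\theta_b:=\bigwedge\{\theta\in L: z\equiv b\,(\theta)\}$ (the empty meet being $A\times A$, still in $L$), the finest member of $L$ linking $z$ and $b$. Since every $\theta\in L$ relating $z$ and $b$ contains $\theta_b$, all the constraints attached to $b$ --- possibly infinitely many --- collapse to the single requirement $c\equiv f(b)\,(\theta_b)$. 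So it is enough to solve the finite system $c\equiv f(b)\,(\theta_b)$, $b\in B$.

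Next I would verify the compatibility hypothesis of the Chinese remainder condition for this system, namely $f(b)\equiv f(b')\,(\theta_b\vee\theta_{b'})$ for all $b,b'\in B$; this is where the hypothesis that $f$ itself preserves $L$ enters. From $z\equiv b\,(\theta_b)$ and $z\equiv b'\,(\theta_{b'})$ one gets $b\equiv z\equiv b'$ modulo $\theta_b\vee\theta_{b'}$, hence $b\equiv b'\,(\theta_b\vee\theta_{b'})$; applying $f$, which preserves the member $\theta_b\vee\theta_{b'}$ of $L$, yields the claim. The Chinese remainder condition then furnishes a solution $c\in A$, and one checks that $f(z):=c$ works: any $\theta\in L$ with $z\equiv b\,(\theta)$ contains $\theta_b$, so $c\equiv f(b)\,(\theta_b)$ forces $c\equiv f(b)\,(\theta)$, while the constraints internal to $B$ are untouched. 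The only subtle point --- the main obstacle, such as it is --- is the reduction step: one must make sure that replacing the whole family of congruences relating $z$ to $b$ by the single congruence $\theta_b$ uses nothing beyond $\theta_b\in L$, which is precisely what stability under arbitrary meets guarantees; everything afterwards is a direct application of the Chinese remainder condition.
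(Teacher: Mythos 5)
Your proposal is correct and follows essentially the same route as the paper: use stability under arbitrary meets to replace the (possibly infinite) family of constraints by a finite system of congruences, then solve it via the Chinese remainder condition. The only differences are cosmetic --- the paper indexes the system by the image $f(B)$ rather than by $B$, and leaves the compatibility check and the final verification as an unproved claim, both of which you carry out explicitly.
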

We recall the proof.
\begin{proof}
Our aim is to find $x\in A$ such that for each $\theta \in L$ and $b\in B$, if $b\equiv z (\theta) $ then  $f(b)\equiv x (\theta)$. Let $B':= f(B)$. For each $b'\in B'$, let $\theta_{b'}$ be the least element of $L$ such that 
\begin{equation} 
b\equiv z(\theta_{b'})
\end{equation}
for all $b$ such that $f(b)= b'$. 

We claim that the system $x \equiv b'(\theta_{b'})$ is solvable and next that any solution yields the element we are looking for.  \end{proof}

\begin{corollary}\label{extension property}
If $L$ is arithmetical on a finite or  countable set $A$, then every partial function $f: B\rightarrow A$ where $B$ is a finite subset of $A$ which preserves all members of $L$ extends to a total function $\overline f$ with the same property. 
\end{corollary}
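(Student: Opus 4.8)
The plan is to bootstrap from Corollary~\ref{oneextension} by a straightforward induction, using countability of $A$ to exhaust the set in a sequence of single-point steps. First I would fix an enumeration $A\setminus B=\{z_1,z_2,z_3,\dots\}$ (finite or infinite); if $A$ itself is finite there is nothing beyond finitely many applications of the previous corollary, so the interesting case is $A$ countably infinite. Set $B_0:=B$, $f_0:=f$, and $B_k:=B_{k-1}\cup\{z_k\}$. Assuming inductively that $f_{k-1}:B_{k-1}\to A$ is a partial function preserving every member of $L$, apply Corollary~\ref{oneextension} with the finite set $B_{k-1}$ and the point $z_k\in A\setminus B_{k-1}$ to obtain an extension $f_k:B_k\to A$ which again preserves all members of $L$. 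This requires that $L$ be arithmetical (it is, by hypothesis) and, to invoke Corollary~\ref{oneextension} verbatim, that $L$ be closed under arbitrary meets; since $A$ is finite or countable each $\theta_{b'}$ in that proof is the meet of an at most countable family, and in the finite-point-set situation only finitely many relations are actually compared, so the construction of the least element $\theta_{b'}$ goes through. Then $\overline f:=\bigcup_{k\ge 0} f_k$ is a well-defined total function on $\bigcup_k B_k=A$, since the $f_k$ form an increasing chain of functions agreeing on common domains.

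It remains to check that $\overline f$ preserves every $\theta\in L$. This is the only point requiring an argument beyond the routine, and it is where the local (two-point) nature of the preservation condition does the work: if $a\equiv a'\,(\theta)$ with $a,a'\in A$, pick $k$ large enough that both $a,a'\in B_k$; then $\overline f(a)=f_k(a)$ and $\overline f(a')=f_k(a')$, and since $f_k$ preserves $\theta$ we get $\overline f(a)\equiv\overline f(a')\,(\theta)$. Thus preservation, being a condition quantified over pairs, passes to the union of a chain of partial functions each of which preserves $L$ on its own domain.

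I do not expect a serious obstacle here; the substance of the result is entirely contained in Corollary~\ref{oneextension}, and the present statement is the standard ``compactness/exhaustion'' upgrade from one-point extensions to a total extension, valid precisely because $A$ is at most countable (so a single $\omega$-indexed sequence of one-point steps suffices) and because the property ``preserves all members of $L$'' is of local character. The one place to be slightly careful is the hypothesis on meets invoked by Corollary~\ref{oneextension}: at each stage we only need the least element of $L$ above an at most countable set of relations, which exists whenever $L$ is a complete lattice, e.g.\ an algebraic (equivalently, arithmetic and meet-closed) sublattice of $\Eqv(A)$; I would remark that this mild completeness assumption is what makes the one-point step available, and it is automatically met in the applications (the congruence lattice of $(\Z,+)$).
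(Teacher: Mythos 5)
Your proposal is correct and is essentially the paper's own proof: enumerate $A\setminus B$, iterate the one-point extension of Corollary~\ref{oneextension} to get an increasing chain $f_0\subseteq f_1\subseteq\cdots$, and take $\overline f:=\bigcup_n f_n$. You even supply the two details the paper leaves implicit (that preservation, being a condition on pairs, passes to the union of the chain, and the meet-closure hypothesis needed to invoke Corollary~\ref{oneextension}), so there is nothing to add.
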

\begin{proof}Enumerate the elements of $A\setminus B$ in a list $z_0, \dots z_n\dots$. Set $B_n:= B\cup \{z_m:m<n\}$. Define $f_n: B_n\rightarrow A$ in such a way that $f_0= f$ and $f_{n+1}$ extends $f_{n}$ to the element $z_n$ and to no other. Set $\overline f:= \bigcup_n  f_n$. 
\end{proof}

\subsection{Representable lattices  and ultrametric spaces}
\subsubsection{Chinese remainder theorem and metric spaces} 

Chinese remainder condition can be viewed as a property of balls in a metric space. 
For an example,  in the case of $\Z$, if we may view the congruence class of $a_i$ modulo $r_i$ as the (closed) ball $B(a_i, r_i):= \{x\in E: d(a_i, x)\leq r_i\}$ in a metric space $(E, d)$, we are looking for an element of the intersection of these balls. Conditions insuring that such element exists have been considered in metric spaces, Helly property and convexity being the keywords. 
In our case, we may observe that $\Z$ has a structure of ultrametric space, but  the set of values of the distance is not totally ordered. Ordering  $\N$ by the reverse of divisibility: $n\leq m$ if $n$ is a multiple of $m$, we get a (distributive) complete lattice, the least element being $0$, the largest $1$, the  join $n\vee m$ of $n$ and $m$ being the largest common divisor.   Replace the addition by the join and for two elements $a,b\in \Z$, set $d(a,b):= \vert a- b\vert$. Then $d(a,b)=0$ iff $a=b$; $d(a,b)=d(b,a)$ and $d(a,b)\leq d(a,c)\vee d(c,b)$ for all $a,b, c\in \Z$. With this definition, closed balls are congruence classes.  In an ordinary  metric space, a necessary condition for the non-emptiness of the intersection of two balls $B(a_i, r_i)$ and $B(a_j, r_j)$ is that the distance between centers is at most the sum of the radii, i.e. $d(a_i,a_j)\leq r_i+r_j$.  Here this yields $d(a_i,a_j)\leq r_i\vee r_j$ that is $a_i$ and $a_j$ are congruent modulo $lcd(r_i, r_j)$. 
Metric spaces for which this necessary condition suffices are said \emph{convex}. When this condition suffices for the non-emptiness of the intersection of any family of balls they are said \emph{hyperconvex} and \emph{finitely hyperconvex} if it suffices for any finite family. Hence, Chinese remainder theorem of arithmetic is the finite hyperconvexity of $\Z$ viewed as an ultrametric space.   

\subsubsection{Convexity and hyperconvexity}
Let $\mathcal D:=(E,d)$ be a metric space over $\R^{+}$,  $a\in E$ and $r\in \R^{+}$. The \emph{closed ball of center $a$, radius $r$} is the set: 
\begin{equation}
B(a, r): =\{x\in E: d(a,x)\leq r\}.  
\end{equation}
A family $\mathcal H$ of subsets of $E$ has the \emph{$2$-Helly property}  if the intersection of any subfamily $\mathcal H'$ is non-empty provided that the members of $\mathcal H'$ pairwise intersect. We say that $\mathcal H$ has the  \emph{finite $2$-Helly property} provided that the property above holds for finite subfamilies. 

A family $\mathcal B$ of  balls is \emph{convex} if for two members of $\mathcal B$, $B(a, r)\cap B(a',r')\not = \emptyset$ whenever $d(a,a')\leq r+ r'$.

The space $\mathcal D$ is \emph{hyperconvex}, resp. \emph{finitely hyperconvex} if the family of closed balls is convex and has the $2$-Helly property, resp. the finite $2$-Helly property. 

These notions were introduced  in \cite{aronszajn} and developped in \cite{isbell}.  They were extended to metric spaces over a Heyting algebra in \cite{jawhari-all}. The case of ultrametric spaces over a join-semilattice was particularly studied in \cite{pouz-rose}. We present some elements below.

\subsubsection{Ultrametric spaces}

A \emph{join-semilattice} is an ordered set in which two arbitrary elements $x$ and $y$  have  a join, denoted by $x\vee y$,  defined as the least element of the set of common upper bounds of $x$ and $y$.

Let $V$ be  a join-semilattice with a least element, denoted by $0$.
A \emph{pre-ultrametric space} over $V$ is a pair $\mathcal D:=(E,d)$ where $d$ is a map from $E\times E$ into $V$ such that for all $x,y,z \in E$:
\begin{equation} \label{eq:ultra1} 
d(x,x)=0,\; d(x,y)=d(y,x)  \text{~and } d(x,y)\leq d(x,z)\vee d(z, y).
\end{equation}

\noindent The map $d$ is an \emph{ultrametric distance} over $V$ and $\mathcal D$ is an \emph{ultrametric space} over $V$ if $\mathcal D$ is a pre-ultrametric space and $d$ satisfies \emph{the separation axiom}:
\begin{equation} \label{eq:ultra2}
d(x,y)=0\; \text{implies} \;  x=y .
\end{equation}

Any system $\mathcal M:=(E, (\rho_i)_{i\in
I})$ of equivalence relations on  a set $E$ can be viewed as a pre-ultrametric space on $E$. Indeed, 
given a set $I$,  let $\powerset (I)$ be the power set of $I$. Then $\powerset (I)$, ordered by inclusion, is a join-semilattice (in fact a complete Boolean algebra) in which the join is the union, and  $0$  the empty set.
 \begin {proposition}\label{prop:ultra1}
Let $\mathcal M:=(E, (\rho_i)_{i\in
I})$ be a system of equivalence relations. For $x,y\in E$, set $d_{\mathcal M}(x,y):=\{i\in I: (x,y)\not \in \rho_i\}$. Then   the pair $U_{\mathcal M}:=(E, d_{\mathcal M})$ is a pre-ultrametric space over $\powerset (I)$.

\noindent Conversely, let $\mathcal D:=(E,d)$ a pre-ultrametric space over $\powerset (I)$. For every $i\in I$ set $\rho_i:=\{(x,y)\in E\times E: i\not \in d(x,y) \}$ and let  $\mathcal M:=(E, (\rho_i)_{i\in
I})$. Then $\rho_i$ is an equivalence relation on $E$ and $d_{\mathcal M}=d$.

\noindent Furthermore,  $U_{\mathcal M}$ is an ultrametric space if and only if $\bigcap_{i\in I} \rho_i= \Delta_E:=\{(x,x): x\in E\}.$   \end{proposition}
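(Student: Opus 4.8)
The plan is to verify each of the four assertions in Proposition~\ref{prop:ultra1} essentially by unwinding the definitions; the content is entirely formal, so the main task is to organize the bookkeeping cleanly rather than to overcome any real obstacle. I would begin with the forward direction: given the system $\mathcal M=(E,(\rho_i)_{i\in I})$, set $d_{\mathcal M}(x,y):=\{i\in I:(x,y)\notin\rho_i\}$ and check the three conditions in \eqref{eq:ultra1}. Symmetry $d_{\mathcal M}(x,y)=d_{\mathcal M}(y,x)$ is immediate from symmetry of each $\rho_i$, and $d_{\mathcal M}(x,x)=\emptyset=0$ because each $\rho_i$ is reflexive. For the ultrametric inequality $d_{\mathcal M}(x,y)\subseteq d_{\mathcal M}(x,z)\cup d_{\mathcal M}(z,y)$, I would argue contrapositively: if $i\notin d_{\mathcal M}(x,z)$ and $i\notin d_{\mathcal M}(z,y)$, then $(x,z)\in\rho_i$ and $(z,y)\in\rho_i$, so $(x,y)\in\rho_i$ by transitivity, i.e. $i\notin d_{\mathcal M}(x,y)$. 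This is exactly the point where transitivity of the $\rho_i$ is used, and it is the closest thing to a ``key step''.

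Next I would treat the converse. Starting from a pre-ultrametric space $\mathcal D=(E,d)$ over $\powerset(I)$, define $\rho_i:=\{(x,y):i\notin d(x,y)\}$ and check that each $\rho_i$ is an equivalence relation: reflexivity follows from $d(x,x)=0=\emptyset$, so $i\notin d(x,x)$; symmetry follows from $d(x,y)=d(y,x)$; transitivity follows from $d(x,y)\subseteq d(x,z)\cup d(z,y)$, since if $i\notin d(x,z)$ and $i\notin d(z,y)$ then $i\notin d(x,z)\cup d(z,y)\supseteq d(x,y)$, hence $i\notin d(x,y)$. Then I would verify $d_{\mathcal M}=d$ by a direct chase: $i\in d_{\mathcal M}(x,y)$ iff $(x,y)\notin\rho_i$ iff $i\in d(x,y)$, so the two maps into $\powerset(I)$ agree pointwise.

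Finally, for the separation clause I would observe that $U_{\mathcal M}$ is an ultrametric space iff $d_{\mathcal M}(x,y)=\emptyset\Rightarrow x=y$ for all $x,y$, and that $d_{\mathcal M}(x,y)=\emptyset$ means $(x,y)\in\rho_i$ for every $i\in I$, i.e. $(x,y)\in\bigcap_{i\in I}\rho_i$. Thus the separation axiom \eqref{eq:ultra2} holds precisely when $\bigcap_{i\in I}\rho_i$ contains no pair $(x,y)$ with $x\neq y$, which (since this intersection always contains the diagonal $\Delta_E$) is equivalent to $\bigcap_{i\in I}\rho_i=\Delta_E$. The only place one must be slightly careful is to note that $\bigcap_i\rho_i\supseteq\Delta_E$ automatically, so the stated equality is genuinely equivalent to, and not merely implied by, separation; beyond that the proof is a routine verification and I would present it compactly in the three blocks just described.
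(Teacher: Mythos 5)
Your verification is correct and complete; the paper states this proposition without proof, treating it as an immediate unwinding of the definitions, and your three blocks (transitivity giving the ultrametric inequality via the contrapositive, the pointwise identity $i\in d_{\mathcal M}(x,y)\Leftrightarrow i\in d(x,y)$, and the observation that $\bigcap_{i\in I}\rho_i\supseteq\Delta_E$ always holds so separation is genuinely equivalent to equality with the diagonal) are exactly the intended argument. Nothing is missing.
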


For a join-semilattice $V$  with a $0$ and for two pre-ultrametric spaces $\mathcal D:=(E,d)$ and $\mathcal D':=(E',d')$  over $V$,  a \emph{non-expansive mapping} (or contracting map) from $\mathcal D$ to $\mathcal D'$ is any map $f:E\rightarrow  E'$ such that for all $ x,y\in E$:
\begin{equation}
d'(f(x),f(y))\leq d(x,y). \end{equation}
Pre-ultrametric spaces with  their non-expansive mappings and systems of equivalence relations with their relational homomorphisms are two faces of the same coin.   Indeed: 

\begin{proposition} \label{prop:ultra2} Let $\mathcal M:=(E, (\rho_i)_{i\in
I})$ and $\mathcal M':=(E', (\rho'_i)_{i\in
I})$ be two systems of equivalence relations. A map $f:E\rightarrow E'$ is a  homomorphism from $\mathcal M$ into $\mathcal M'$ if and only if $f$ is a non-expansive mapping from $U_{\mathcal M}$ into $U_{\mathcal M'}$.
\end{proposition}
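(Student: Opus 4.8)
The plan is to unwind the definitions on both sides and check the two implications. Recall that a homomorphism $f:\mathcal M\to\mathcal M'$ means that for every $i\in I$ and every $x,y\in E$, if $(x,y)\in\rho_i$ then $(f(x),f(y))\in\rho'_i$; and that $f$ being non-expansive from $U_{\mathcal M}$ to $U_{\mathcal M'}$ means $d_{\mathcal M'}(f(x),f(y))\subseteq d_{\mathcal M}(x,y)$ for all $x,y\in E$, where by Proposition \ref{prop:ultra1} these distances are given by $d_{\mathcal M}(x,y)=\{i\in I:(x,y)\notin\rho_i\}$ and similarly for $\mathcal M'$. So the whole statement reduces to the equivalence, for fixed $x,y$, between ``$(x,y)\in\rho_i\Rightarrow(f(x),f(y))\in\rho'_i$ for all $i$'' and ``$\{i:(f(x),f(y))\notin\rho'_i\}\subseteq\{i:(x,y)\notin\rho_i\}$''.

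First I would observe that the inclusion of sets $\{i:(f(x),f(y))\notin\rho'_i\}\subseteq\{i:(x,y)\notin\rho_i\}$ is, by taking contrapositives pointwise in $i$, literally the same as the statement ``for all $i$, $(x,y)\in\rho_i\Rightarrow(f(x),f(y))\in\rho'_i$.'' Indeed $S\subseteq T$ for subsets of $I$ is equivalent to $I\setminus T\subseteq I\setminus S$, and here $I\setminus\{i:(f(x),f(y))\notin\rho'_i\}=\{i:(f(x),f(y))\in\rho'_i\}$ while $I\setminus\{i:(x,y)\notin\rho_i\}=\{i:(x,y)\in\rho_i\}$. Thus $d_{\mathcal M'}(f(x),f(y))\subseteq d_{\mathcal M}(x,y)$ says exactly: whenever $i$ witnesses $(x,y)\in\rho_i$, it also witnesses $(f(x),f(y))\in\rho'_i$.

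Then I would quantify over all pairs $x,y\in E$: $f$ is non-expansive iff the displayed inclusion holds for every $x,y$, iff for every $x,y$ and every $i$ the implication $(x,y)\in\rho_i\Rightarrow(f(x),f(y))\in\rho'_i$ holds, iff $f$ is a homomorphism of the systems $\mathcal M$ and $\mathcal M'$. This completes the argument.

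There is essentially no obstacle here: the content is purely the bookkeeping identity ``inclusion of complement-sets $=$ reverse inclusion,'' applied with the set $I$ of indices as ambient universe and the explicit description of $d_{\mathcal M}$, $d_{\mathcal M'}$ supplied by Proposition \ref{prop:ultra1}. The only thing to be a little careful about is to invoke Proposition \ref{prop:ultra1} so that $d_{\mathcal M}(x,y)$ and $d_{\mathcal M'}(f(x),f(y))$ are genuinely the sets of ``bad indices'' and not some other distance; once that is in place the equivalence is immediate and the proof is two or three lines.
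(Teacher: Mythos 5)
Your proof is correct and is exactly the routine unwinding the paper has in mind when it declares the proof ``immediate and left to the reader'': non-expansiveness in $\powerset(I)$ ordered by inclusion is the containment $d_{\mathcal M'}(f(x),f(y))\subseteq d_{\mathcal M}(x,y)$, and taking complements in $I$ turns this into the homomorphism condition indexwise. Nothing is missing.
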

The  proof  is immediate and left to the reader.
\subsubsection{Metrisation of join-semilattices}
Let $V$ be a join-semilattice with a least element $0$. Let  $d_{\vee}: V\times V \rightarrow V$ defined by $d_{\vee} (x,y)= x\vee y$ if $x\not =y$ and $d_{\vee} (x,y)=0$ if $x=y$. 

\begin{lemma} The map $d_{\vee} $ is a ultrametric distance over $V$ satisfying: 
\begin{equation}\label{eq:maximum}
d_{\vee}(0,x)= x 
\end{equation} 
for all $x\in V$. 

This is the largest ultrametric distance over $V$ satisfying (\ref{eq:maximum}).  

\end{lemma}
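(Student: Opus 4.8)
The plan is to verify the three ultrametric axioms for $d_\vee$ directly, then establish (\ref{eq:maximum}), and finally prove the maximality claim by a short argument comparing an arbitrary competitor distance with $d_\vee$ on each pair of distinct points.

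First I would check that $d_\vee$ is an ultrametric distance over $V$. Symmetry is immediate since $x\vee y=y\vee x$ and the case split $x=y$ versus $x\neq y$ is itself symmetric. The axiom $d_\vee(x,x)=0$ holds by definition. The separation axiom holds because if $x\neq y$ then $d_\vee(x,y)=x\vee y\geq x$; since $V$ has $0$ as least element and $x\vee y$ is an upper bound of both $x,y$, the only way to force $d_\vee(x,y)=0$ would be $x=y=0$, which contradicts $x\neq y$. (Strictly: if $x\vee y=0$ then $x\leq 0$ and $y\leq 0$, so $x=y=0$.) The triangle inequality $d_\vee(x,y)\leq d_\vee(x,z)\vee d_\vee(z,y)$ requires a short case analysis: if $x=y$ the left side is $0$ and there is nothing to prove; if $x\neq y$ then at least one of $x\neq z$, $z\neq y$ holds, and in all such cases the right-hand side is either $x\vee z\vee z\vee y$, $x\vee z$ (when $z=y$), or $z\vee y$ (when $x=z$), each of which is an upper bound of $\{x,y\}$ and hence $\geq x\vee y=d_\vee(x,y)$ by the definition of join as least upper bound.

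Next, (\ref{eq:maximum}) is essentially immediate: for $x\neq 0$ we have $d_\vee(0,x)=0\vee x=x$, and for $x=0$ we get $d_\vee(0,0)=0=x$ as well, so the equation holds for all $x\in V$.

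The maximality claim is the only part with any content, and I expect it to be short. Let $d$ be any ultrametric distance over $V$ satisfying $d(0,x)=x$ for all $x\in V$. I want $d(x,y)\leq d_\vee(x,y)$ for all $x,y$. If $x=y$ both sides are $0$. If $x\neq y$, apply the triangle inequality through the point $0$: $d(x,y)\leq d(x,0)\vee d(0,y)=x\vee y=d_\vee(x,y)$, using the hypothesis on $d$ twice and symmetry. This completes the proof. The only mild subtlety is that one should note $d_\vee$ itself does satisfy (\ref{eq:maximum}) — already checked above — so the family of distances being compared is nonempty and $d_\vee$ is a genuine maximum rather than merely an upper bound; no real obstacle arises anywhere.
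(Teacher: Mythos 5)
Your proposal is correct and follows essentially the same route as the paper: verify the axioms by the case split on equal versus distinct arguments (the triangle inequality reducing to the fact that $x\vee y\vee z$, or $x\vee y$ in the degenerate cases, is an upper bound of $\{x,y\}$), and obtain maximality by passing the triangle inequality through $0$ via $d(x,y)\leq d(x,0)\vee d(0,y)=x\vee y$. Your write-up is merely more explicit about the separation axiom and the subcases, so nothing further is needed.
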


\begin{proof}Let $x,y,z$. If two of these elements are equal, the triangular ineqality holds. Otherwise we have trivially $d_{\vee}(x,y)= x\vee y\vee z= d_{\vee}(x,z) \vee d_{\vee}(z,y)$.  This proves that $d_{\vee}$ is an ultrametric distance. If $d$ is any ultrametric distance satisfying (\ref{eq:maximum}) then $d(x,y)\leq d(x,0) \vee d(0,y)=x\vee y$ for every $x,y\in V$. If $x\not =y$ we get $d(x,y)\leq d_{\vee}(x,y)$ and if $x=y$ we get $d(x,y)=0=d_{\vee}(x,y)$. 
\end{proof}

Let  $x,y$ be  two elements of $V$.

If $d$ is any ultrametric distance over $V$  we have:

\begin{equation}
x\leq y\vee d(x,y)
\end{equation}
and 
\begin{equation}
y\leq x \vee d(x,y). 
\end{equation}

This suggests to look at  residuals. 

Let $D(x,y):= \{z\in V: x\leq y\vee z\}$. If $V$ is a distributive lattice then $D(x,y)$ is a filter. 
Indeed, if $x\leq y\vee  z_1$ and $x\leq y \vee z_2$, then $x\leq  (y\vee z_1) \wedge (y \vee z_2)= y \vee (z_1\wedge z_2)$. 
Hence, if  $V$ is finite then $D(x,y)$ has  a least element, the \emph{residual}  of $x$ and $y$. 

In full generality, one defines the \emph{residual} of two elements $x,y$ of  a join-semilattice $V$ (or even a poset) as the least element $x\setminus y$ of the set $D(x,y)$. If $V$ is a Boolean algebra, this is the ordinary  \emph{difference} of $x$ and $y$. We say that $V$ is \emph{residuated} if the residual of any two elements exists.

We say that  a complete lattice $V$ is \emph{$\kappa$-meet-distributive} if for every subset $Z\subseteq V$ with  $\vert Z\vert \leq \kappa$ and $y\in V$,
$$ \wedge  \{y\vee z: z\in Z\}= y\vee \bigwedge Z.$$ It is \emph{completely meet-distributive} if it is $\vert V\vert$-meet-distributive (beware, this terminology has other meanings). 
We have:

\begin{lemma}\label{residual3} Let $V$ be complete lattice. Then   $V$ is residuated if and only if it  completely meet-distributive. \end{lemma}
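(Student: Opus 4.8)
The plan is to deduce both implications directly from the set $D(x,y):=\{z\in V: x\leq y\vee z\}$ introduced above, using the elementary observation that in \emph{any} complete lattice one always has $y\vee\bigwedge Z\leq \bigwedge\{y\vee z: z\in Z\}$, so that $\kappa$-meet-distributivity is precisely the reverse inequality $\bigwedge\{y\vee z: z\in Z\}\leq y\vee\bigwedge Z$ for all $y\in V$ and all $Z\subseteq V$ with $|Z|\leq\kappa$.

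For the implication ``completely meet-distributive $\Rightarrow$ residuated'', I would fix $x,y\in V$ and take $Z:=D(x,y)$, which is non-empty since $x\leq y\vee x$. Every $z\in Z$ satisfies $x\leq y\vee z$, hence $x\leq\bigwedge\{y\vee z:z\in Z\}$, and by complete meet-distributivity this meet equals $y\vee\bigwedge Z$. Thus $\bigwedge Z\in D(x,y)$; being a lower bound of $D(x,y)$ that belongs to $D(x,y)$, it is its least element, so $x\setminus y=\bigwedge Z$ exists and $V$ is residuated.

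For the converse ``residuated $\Rightarrow$ completely meet-distributive'', I would fix $y\in V$ and $Z\subseteq V$ (the case $Z=\emptyset$ being trivial, both sides equalling the top element), set $x:=\bigwedge\{y\vee z:z\in Z\}$, and prove the non-trivial inequality $x\leq y\vee\bigwedge Z$. For each $z\in Z$ one has $x\leq y\vee z$, so $z\in D(x,y)$ and therefore $x\setminus y\leq z$, since $x\setminus y$ is the least element of $D(x,y)$; taking the meet over $z\in Z$ gives $x\setminus y\leq\bigwedge Z$, whence $y\vee(x\setminus y)\leq y\vee\bigwedge Z$. Finally $x\leq y\vee(x\setminus y)$ because $x\setminus y\in D(x,y)$, and combining the two inequalities yields $x\leq y\vee\bigwedge Z$, as required.

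I do not expect a genuine obstacle here: the argument is a short two-way chase. The only points requiring care are bookkeeping ones --- choosing $Z:=D(x,y)$ as the index set in the first direction, remembering that $x\setminus y$ simultaneously lies in $D(x,y)$ and is its minimum, and not overlooking the automatic half of the distributivity identity, for which completeness is used only to guarantee that the infinitary meets exist.
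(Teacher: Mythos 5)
Your proof is correct and follows essentially the same two-way argument as the paper: in one direction you apply meet-distributivity to $Z:=D(x,y)$ to show $\bigwedge D(x,y)$ is the residual, and in the other you use that the residual is the minimum of $D(x,y)$ to obtain $x\setminus y\leq\bigwedge Z$ and hence the non-trivial inequality. The only differences are minor points of extra care (noting $D(x,y)\neq\emptyset$ and treating $Z=\emptyset$), which the paper leaves implicit.
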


\begin{proof}
Suppose that $V$ is residuated. Let $y\in V$ and $Z\subseteq V$. Let  $x:= \wedge  \{y\vee z: z\in Z\}$ and let $x\setminus y$ be the residual of $x$ and $y$. 
Trivially $y\vee \bigwedge Z$  is a lower bound of $\{y\vee z: z\in Z\}$. Hence, $y\vee \bigwedge Z\leq  \wedge  \{y\vee z: z\in Z\}=x$. We claim that conversely $x\leq    y\vee \bigwedge Z$. It will follows that 
$\bigwedge \{y\vee z: z\in Z\}= y\vee \bigwedge Z$ as required. Indeed, from the fact that $x$ is a lower bound of $\{y\vee z: z\in Z\}$ we get that $x\setminus y$ is a lower bound of $Z$ and thus $x \setminus y \leq \bigwedge Z$. It follows that $x\leq y \vee x\setminus y\leq y\vee \bigwedge Z$, proving our claim.

 Suppose that  $V$ is complete and completely meet-distributive. Let $x,y\in V$ and $Z:= D(x,y)$. Since $V$ is complete, $\bigwedge Z$ exists. Due to complete meet-distributivity, we have $y\vee \bigwedge Z=\bigwedge \{y\vee z: z\in Z\}\geq x$, hence $\bigwedge Z$ is the least element of $Z$, proving that this is $x\setminus y$.  
\end{proof}

\begin{corollary}\label{cor:distributive1} A finite lattice is residuated iff it is distributive. 
\end{corollary}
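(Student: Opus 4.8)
The plan is to deduce Corollary~\ref{cor:distributive1} from Lemma~\ref{residual3}, since a finite lattice is automatically complete, and for a finite lattice being completely meet-distributive is the same as being $2$-meet-distributive (any finite meet reduces to iterated binary meets). So the task reduces to proving: a finite lattice $V$ is distributive if and only if $y\vee (z_1\wedge z_2)=(y\vee z_1)\wedge(y\vee z_2)$ for all $y,z_1,z_2\in V$.

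First I would handle the easy direction. If $V$ is distributive, then the identity $y\vee(z_1\wedge z_2)=(y\vee z_1)\wedge(y\vee z_2)$ is just one of the two defining distributive laws, so $V$ is $2$-meet-distributive; being finite, it is complete, hence residuated by Lemma~\ref{residual3}. (The computation displayed right after the statement of residuatedness for distributive lattices — ``$x\le y\vee z_1$ and $x\le y\vee z_2$ imply $x\le y\vee(z_1\wedge z_2)$'' — already records exactly this.)

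For the converse, suppose $V$ is finite and residuated. By Lemma~\ref{residual3} it is completely meet-distributive, in particular $y\vee(z_1\wedge z_2)=(y\vee z_1)\wedge(y\vee z_2)$ for all $y,z_1,z_2$. I would then invoke the standard fact that the two distributive laws in a lattice are equivalent: the meet-distributive law $a\wedge(b\vee c)=(a\wedge b)\vee(a\wedge c)$ follows formally from the join-distributive law $a\vee(b\wedge c)=(a\vee b)\wedge(a\vee c)$ (and vice versa). Hence $V$ is distributive. Alternatively, and perhaps more self-containedly, one can argue via the classical characterization of distributive lattices by forbidden sublattices: a finite (indeed any) lattice is distributive iff it contains neither $M_3$ nor $N_5$ as a sublattice; a quick check shows that in each of $M_3$ and $N_5$ one can exhibit a triple $y,z_1,z_2$ violating $y\vee(z_1\wedge z_2)=(y\vee z_1)\wedge(y\vee z_2)$, so residuatedness already rules both out. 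I expect the only subtlety worth a sentence is recording the equivalence of the two one-sided distributive laws (or the forbidden-sublattice check), everything else being an immediate appeal to Lemma~\ref{residual3} together with finiteness; there is no real obstacle here.
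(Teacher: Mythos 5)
Your proof is correct and follows exactly the route the paper intends: the corollary is stated immediately after Lemma~\ref{residual3} precisely because a finite lattice is complete and completely meet-distributive reduces there to the binary law $y\vee(z_1\wedge z_2)=(y\vee z_1)\wedge(y\vee z_2)$, which is equivalent to distributivity. The details you supply (finiteness gives completeness, the reduction of arbitrary meets to binary ones, and the equivalence of the two one-sided distributive laws) are exactly the routine verifications the paper leaves implicit.
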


\begin{lemma}\label{residual} Let $V$ be  a join semilattice with a least element $0$. 
If the residual of any two elements $x,y$ of $V$ exists, then the map $d_V: V\times V\rightarrow V$ defined by $d_V(x,y):= (x\setminus y) \vee (y\setminus x)$ is  an ultrametric distance over $V$, and in fact the least possible distance $d$ satisfying (\ref{eq:maximum}).

\end{lemma}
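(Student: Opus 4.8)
The plan is to verify the three defining properties of an ultrametric distance for $d_V(x,y):=(x\setminus y)\vee(y\setminus x)$, and then to identify $d_V$ as the least distance satisfying (\ref{eq:maximum}). The symmetry $d_V(x,y)=d_V(y,x)$ is immediate from the symmetry of the defining formula. For the separation-type property at the diagonal, note that $x\leq x\vee 0$, so $0\in D(x,x)$, hence $x\setminus x=0$ and therefore $d_V(x,x)=0\vee 0=0$. (We do not claim the full separation axiom here, only $d_V(x,x)=0$, since the hypothesis is merely that $V$ is a join-semilattice with $0$ in which residuals exist.)

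The substantive point is the triangle inequality $d_V(x,y)\leq d_V(x,z)\vee d_V(z,y)$. Here I would isolate the following basic fact about residuals: if $u\setminus v$ and $v\setminus w$ both exist, then $u\leq w\vee (v\setminus w)\vee(u\setminus v)$. Indeed $u\leq v\vee(u\setminus v)$ by definition of $u\setminus v$, and $v\leq w\vee(v\setminus w)$ by definition of $v\setminus w$; substituting the second into the first gives the claim. Consequently $(v\setminus w)\vee(u\setminus v)\in D(u,w)$, so $u\setminus w\leq (u\setminus v)\vee(v\setminus w)$ by minimality of $u\setminus w$. Applying this with $(u,v,w):=(x,z,y)$ gives $x\setminus y\leq (x\setminus z)\vee(z\setminus y)$, and with $(u,v,w):=(y,z,x)$ gives $y\setminus x\leq (y\setminus z)\vee(z\setminus x)$. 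Taking the join of these two inequalities and regrouping the four terms yields exactly $d_V(x,y)\leq d_V(x,z)\vee d_V(z,y)$. This establishes that $d_V$ is an ultrametric distance over $V$; the main (only) obstacle is the little residual lemma just stated, and that is a one-line chase once written down.

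It remains to check (\ref{eq:maximum}), i.e. $d_V(0,x)=x$, and minimality. Since $0\leq 0\vee x$ we have $0\setminus x=0$; and since $x\leq 0\vee x$ we have $x\setminus 0\leq x$, while conversely any $z$ with $x\leq 0\vee z=z$ satisfies $z\geq x$, so $x\setminus 0=x$. Hence $d_V(0,x)=0\vee x=x$. For minimality, let $d$ be any ultrametric distance over $V$ satisfying (\ref{eq:maximum}). From the triangle inequality, $x\leq d(x,0)\vee d(0,y)$ is not quite what we want; rather use $x\leq d(x,y)\vee d(y,0)=d(x,y)\vee y$ (by (\ref{eq:maximum}) applied with the pair $\{0,y\}$ and symmetry), so $d(x,y)\in D(x,y)$ and hence $x\setminus y\leq d(x,y)$; symmetrically $y\setminus x\leq d(x,y)$. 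Therefore $d_V(x,y)=(x\setminus y)\vee(y\setminus x)\leq d(x,y)$, which is the asserted minimality. This completes the argument.
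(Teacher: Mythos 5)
Your argument follows essentially the same route as the paper's: the heart of both proofs is the residuation inequality $x\setminus y\leq (x\setminus z)\vee(z\setminus y)$, obtained by chaining $x\leq z\vee(x\setminus z)$ and $z\leq y\vee(z\setminus y)$ and invoking minimality of $x\setminus y$, after which the triangle inequality is a join of two instances. Your treatment of minimality (from $x\leq d(x,y)\vee d(y,0)=d(x,y)\vee y$ deduce $x\setminus y\leq d(x,y)$, and symmetrically) is exactly the observation the paper records in the displayed inequalities just before the lemma, and you spell it out more carefully than the paper's proof does.

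There is one genuine, if small, gap: you explicitly decline to prove the separation axiom, and your stated reason --- that the hypotheses are too weak to yield it --- is mistaken. In this paper an \emph{ultrametric distance} (as opposed to a pre-ultrametric one) is required to satisfy $d(x,y)=0\Rightarrow x=y$, so the lemma as stated does demand it, and the paper's proof asserts $d_V(x,y)=0$ iff $x=y$. The point you are missing is that separation is automatic here: if $(x\setminus y)\vee(y\setminus x)=0$ then, since $0$ is the least element of the join-semilattice, both $x\setminus y=0$ and $y\setminus x=0$; hence $x\leq y\vee(x\setminus y)=y$ and $y\leq x\vee(y\setminus x)=x$, so $x=y$. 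Adding this one line closes the gap and your proof is then complete.
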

\begin{proof}
 Clearly, $d_V(x,y)=0$ iff $x=y$ and $d_V(x,y)=d_V(y,x)$. 
Let $x\in V$. We have $0\setminus x=0$ and $x\setminus 0=x$ hence $d_v(0,x)=x$. 
Let $x,y\in L$. Clearly, $x\leq y\vee (x\setminus y)\leq d_V(x,y)$. Furthermore, $d_V(x,y)\leq x\vee y$. Hence the triangular inequality holds for $\{0, x,y\}$. Now, let $z\in V$. We have:

\begin{equation}\label{eq:residuation}
x\setminus y\leq (x\setminus z) \vee (z\setminus y).
\end{equation}
 Indeed, this inequality amounts to   $x\leq y\vee ( (x\setminus z) \vee (z\setminus y))$. An inequality which follows from the inequalities  $x\leq z\vee (x\setminus z)$ and $z\leq y \vee (z\setminus y)$.   
 
 The triangular inequality follows easily. 
\end{proof}

For an example, if $V$ is a Boolean algebra, $d_V(a,b)= a \Delta b$,  the symmetric difference of $a$ and $b$.

A completely meet-distributive lattice is a special instance of  what we call in \cite{jawhari-all} a Heyting algebra.
From Lemma \ref{residual},  \ref{residual2} and general properties of Heyting algebra presented in \cite {jawhari-all},  we have:
\begin{theorem}
If a join-semilattice $V$ is a Heyting algebra then 
it can be endowed with an ultrametric  distance $d_V$ for which it becomes hyperconvex. Futhermore, every ultrametric metric space over $V$ embeds isometrically into a power of $V$. 
\end{theorem}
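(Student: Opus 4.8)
The plan is to package the two halves of the statement in the order in which the machinery becomes available. First I would record, from Lemma~\ref{residual}, that $d_V(x,y):=(x\setminus y)\vee(y\setminus x)$ is an ultrametric distance over $V$ satisfying (\ref{eq:maximum}); since a Heyting algebra in the sense of \cite{jawhari-all} is in particular a completely meet-distributive complete lattice, Lemma~\ref{residual3} guarantees that $V$ is residuated, so $d_V$ is well defined. The separation axiom (\ref{eq:ultra2}) is immediate: $d_V(x,y)=0$ forces $x\le y$ and $y\le x$. Thus $(V,d_V)$ is genuinely an ultrametric space over $V$, and by (\ref{eq:maximum}) every singleton ball is recovered as $B(0,x)=\{z:d_V(0,z)\le x\}$, so balls of $(V,d_V)$ have a transparent lattice-theoretic description. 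The reference to ``Lemma~\ref{residual2}'' in the statement is to a residuation identity of the same flavour as (\ref{eq:residuation}), governing how residuals interact with arbitrary joins; I would invoke it exactly to handle infinite intersections of balls.

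Next comes hyperconvexity of $(V,d_V)$. I would first verify convexity: given balls $B(a,r)$ and $B(b,s)$ with $d_V(a,b)\le r\vee s$, one must exhibit $x$ with $d_V(a,x)\le r$ and $d_V(b,x)\le s$. The natural candidate is $x:=a\wedge(b\vee s)\ \vee\ \dots$; more cleanly, in a Heyting algebra one sets $x$ to be the meet of the two balls' ``canonical representatives'' and checks the two distance inequalities using complete meet-distributivity to push the meet through the joins, exactly as in the proof of Lemma~\ref{residual3}. For the finite (and then arbitrary) $2$-Helly property I would take a family of balls $B(a_i,r_i)$, $i\in I$, that pairwise intersect, so $d_V(a_i,a_j)\le r_i\vee r_j$ for all $i,j$, and produce the point
$$x:=\bigwedge_{i\in I}\bigl(a_i\vee r_i\bigr)\ \text{(suitably symmetrised)},$$
verifying $d_V(a_i,x)\le r_i$ for each $i$ by distributing the infinite meet over the joins $a_j\vee r_j$ via complete meet-distributivity and collapsing the result with the pairwise hypothesis and inequality (\ref{eq:residuation}). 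This is the step where the full strength of ``Heyting'' — completeness plus complete meet-distributivity, i.e.\ Lemma~\ref{residual3} — is essential, and it is the main obstacle: one must choose the witness $x$ so that both ``$x$ is close to $a_i$'' and ``$x$ is close to $a_j$'' survive simultaneously, which is precisely what convexity of the ball family is designed to make possible, and getting the algebra of residuals to cooperate cleanly (rather than through a case analysis on equalities as in Lemma~\ref{residual}) is the delicate point.

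Finally, for the embedding assertion, let $(E,d)$ be any ultrametric space over $V$. I would define $\varphi:E\to V^{E}$ by $\varphi(a):=\bigl(d(a,e)\bigr)_{e\in E}$, equipping $V^{E}$ with the sup metric $D\bigl((x_e),(y_e)\bigr):=\bigvee_{e}d_V(x_e,y_e)$, which is again an ultrametric over $V$ (a pointwise product of ultrametrics). That $\varphi$ is non-expansive is the triangle inequality for $d$; that it is distance-preserving (isometric) follows by evaluating at the coordinate $e=a$, where $d_V\bigl(d(a,a),d(b,a)\bigr)=d_V(0,d(a,b))=d(a,b)$ by (\ref{eq:maximum}), combined with the reverse inequality $d_V(d(a,e),d(b,e))\le d(a,b)$ coming from (\ref{eq:residuation})/Lemma~\ref{residual2}. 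This is the standard Kuratowski-type embedding, transported to the $V$-valued setting, and needs nothing beyond the distance axioms and (\ref{eq:maximum}); hence the theorem follows.
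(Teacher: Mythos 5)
Your argument is essentially correct, and it supplies what the paper only sketches: the paper gives no written proof of this theorem, deferring to Lemma \ref{residual}, Lemma \ref{residual2} and the general theory of \cite{jawhari-all}. Your two constructions are the standard ones. For hyperconvexity, the witness $x:=\bigwedge_{i\in I}(a_i\vee r_i)$ needs no symmetrisation and subsumes the convexity step: $x\le a_i\vee r_i$ (hence $x\setminus a_i\le r_i$) is automatic, while $a_j\le x\vee r_j=\bigwedge_{i}(a_i\vee r_i\vee r_j)$ (hence $a_j\setminus x\le r_j$) follows from complete meet-distributivity together with the pairwise hypothesis $a_j\le a_i\vee r_i\vee r_j$; so you can discard the hedged two-ball candidate. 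For the embedding, the Kuratowski map $a\mapsto(d(a,e))_{e\in E}$ into $V^{E}$ with the sup distance, isometric by (\ref{eq:maximum}) at the coordinate $e=a$ and non-expansive by residuation applied to the triangle inequality, is exactly the argument of \cite{jawhari-all}. Two slips should be corrected. First, Lemma \ref{residual2} is not a residuation identity of the flavour of (\ref{eq:residuation}): it asserts that in an algebraic lattice the residual of two compact elements is compact, and it plays no role in the present theorem (it is needed for the subsequent theorem on $K(L)$). Second, you claim that a Heyting algebra in the sense of \cite{jawhari-all} is in particular a completely meet-distributive complete lattice, whereas the paper states the inclusion the other way round (``a completely meet-distributive lattice is a special instance of what we call in \cite{jawhari-all} a Heyting algebra''). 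Your proof therefore establishes the theorem for completely meet-distributive complete lattices --- the only case the paper actually uses afterwards --- but if the cited notion is genuinely broader (say, a residuated join-semilattice lacking arbitrary meets), then the existence of the infinite meet $\bigwedge_{i}(a_i\vee r_i)$ and of the supremum $\bigvee_{e}d_V(x_e,y_e)$ defining the distance on $V^{E}$ would each require separate justification.
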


 We note that $\N$ ordered by reverse of divisibility is not meet-distributive. Still, it can be equipped with a distance (given by the absolute value). It is finitely hyperconvex, but it is not hyperconvex. Indeed, an infinite set of equations  does not need to have a solution while every  finite subset has one (for an example, let $a_{2n}:= 2$, $r_{2n}:= 2^{n}$, $a_{2n+1}:=3$, $r_{2n+1}:= 3^{n}$, then $d(a_{2n}, a_{2m})=0\leq r_{2n}\vee r_{2m}$, $d(a_{2n}, a_{2m+1})= 1\leq r_{2n}\vee r_{2m+1}= lcd(2^n, 3^m)=1$).   

This suggests to look at  possible extensions of the results obtained so far about metric spaces over meet-ditributive lattices. We recall some of them.

Let $\mathcal D:=(E,d)$ be a metric space over a join-semilattice $V$. For each $r\in V$ set $\equiv_r:= \{(x,y)\in E: d(x,y)\leq r\}$. Let $Eq_{d}(E):= \{\equiv_r: r\in V\}$.  Let $F:=Hom((E,d), (E,d))$ be the set of contracting maps from $(E,d)$ into itself, let  $\mathcal E_F= (E, F)$ be the algebra made of  unary operations $f\in F$ and let  $Cong_{d}(E):= Cong(\mathcal E_F)$ be the set of congruences of this algebra, that is  the set of all equivalence relations on $E$ preserved by all contractions from $E$ into itself.

\begin{proposition}\label{fourre-tout} Let $(E,d)$ be an ultrametric space over a join-semilattice $V$ with a least element $0$. Then:
\begin{enumerate}

\item $Eq_{d}(E) \subseteq Cong_{d}(E)$. 

\item A map $f:E \rightarrow E$ is a contraction of $(E, d)$ into itself iff  it preserves all members of $Eq_{d}(E)$. 

\item If the meet of every non-empty subset of $V$ exists, then $Eq_d(E)$ is an intersection closed subset of $Eq(E)$,  the set of equivalence relations on $E$. 
\item The set $Cong_{d}(E)$ is an algebraic lattice; furthermore, for every $(x,y) \in E \times E$,  the  least member   $\delta(x,y)$ of $Cong_{d}(E)$ containing $(x,y)$ is a compact element of $Cong_{d}(E)$. Furthermore, $\delta(x,y)$ is included into $\equiv_r$, where   $r:= d(x,y)$. 

\item Any two members of $Eq_d(E)$ commute and $\equiv_r\circ \equiv_s=\equiv_s\circ \equiv_r= \equiv_{r\vee s}$ for every $r,s\in  V$  iff $(E,d)$ is convex. 
\item $(E, d)$ is hyperconvex iff $Eq_d(E)$ is a completely meet-distributive lattice of $Eq(E)$. 
\end{enumerate}
\end{proposition}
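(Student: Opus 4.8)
My plan is to dispatch items (1)--(5) by directly unwinding the definitions and to concentrate the real work on item (6). \emph{Items (1)--(4).} For (1) and (2) I would first check that each $\equiv_r$ really is an equivalence relation: reflexivity is $d(x,x)=0\le r$, symmetry is symmetry of $d$, transitivity is $d(x,z)\le d(x,y)\vee d(y,z)\le r\vee r=r$. Then a map $f$ is non-expansive iff for every $r$ it sends pairs with $d(x,y)\le r$ to pairs with $d(f(x),f(y))\le r$, i.e. iff it preserves every $\equiv_r$; the forward implication applied to the maps in $F$ is (1), and taking $r:=d(x,y)$ gives the nontrivial half of (2). For (3): if every non-empty subset of $V$ has a meet then $\bigcap_i\equiv_{r_i}=\equiv_{\bigwedge_i r_i}$ directly from the definition of greatest lower bound. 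For (4): $Cong_d(E)$ is the congruence lattice of the algebra $(E,F)$, so it is closed under arbitrary intersections and under unions of up-directed subfamilies (for unary operations the union of a directed family of congruences is again a preserved equivalence relation), hence is an algebraic lattice; $\delta(x,y)=\bigcap\{\theta\in Cong_d(E):(x,y)\in\theta\}$ is a compact element by the usual argument that a directed join here is a union; and since $\equiv_{d(x,y)}\in Cong_d(E)$ by (1) and contains $(x,y)$, minimality gives $\delta(x,y)\subseteq\equiv_{d(x,y)}$.

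\emph{Item (5).} I would reformulate composition through balls: $(x,y)\in\equiv_r\circ\equiv_s$ iff $B(x,s)\cap B(y,r)\ne\emptyset$. The inclusions $\equiv_r\circ\equiv_s\subseteq\equiv_{r\vee s}$ and $\equiv_s\circ\equiv_r\subseteq\equiv_{r\vee s}$ hold unconditionally by the ultrametric inequality, so the whole content is the reverse inclusion for all $r,s$; unwound through the ball reformulation (and used with the pair in either order) this says precisely that $B(x,r)\cap B(y,s)\ne\emptyset$ whenever $d(x,y)\le r\vee s$, which, quantified over all data, is convexity. Under convexity both $\equiv_r\circ\equiv_s$ and $\equiv_s\circ\equiv_r$ are squeezed equal to $\equiv_{r\vee s}$, giving commutation and the displayed identity; the converse is the same computation read backwards.

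\emph{Item (6), the main point.} By definition $(E,d)$ is hyperconvex iff the balls are convex and have the $2$-Helly property. Convexity is already pinned down by (5): it is equivalent to $\equiv_r\vee\equiv_s=\equiv_r\circ\equiv_s=\equiv_{r\vee s}$ for all $r,s$, hence makes $Eq_d(E)$ closed under the join of $Eq(E)$ with that join given by composition; checking also that $Eq_d(E)$ is closed under the arbitrary meets of $Eq(E)$, i.e. that each $\bigcap_i\equiv_{r_i}$ is again of the form $\equiv_t$ --- which is where the meet structure of $V$ enters --- is what the phrase ``$Eq_d(E)$ is a sublattice of $Eq(E)$'' records. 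It remains to show that, for such a sublattice, the $2$-Helly property is equivalent to complete meet-distributivity of $Eq_d(E)$. One way: given $u\in V$ and a family $(r_i)$, a pair lying in $\bigcap_i(\equiv_u\vee\equiv_{r_i})$ makes, via convexity, the balls $\{B(x,r_i)\}_i\cup\{B(y,u)\}$ pairwise intersect, so $2$-Helly produces a common point $z$, which witnesses $(x,y)\in\equiv_u\circ\bigcap_i\equiv_{r_i}\subseteq\equiv_u\vee\bigwedge_i\equiv_{r_i}$; the reverse inclusion is automatic, so the infinitary meet-distributive law holds. Conversely I would run the Chinese remainder argument on a pairwise-consistent system $x\equiv a_i(\equiv_{r_i})$ --- pairwise consistency being $a_i\equiv a_j(\equiv_{r_i}\vee\equiv_{r_j})$ --- merging the $\equiv_{r_i}$ by transfinite induction, using permutability (from convexity) at the two-element step and the identity $\equiv_u\vee\bigwedge_i\equiv_{r_i}=\bigwedge_i(\equiv_u\vee\equiv_{r_i})$ to pass limit stages; this is the infinitary upgrade of the classical equivalence ``arithmetical $\Leftrightarrow$ Chinese remainder condition'' recalled earlier, and its output is exactly the $2$-Helly property.

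\emph{Where the difficulty lies.} Items (1)--(5) are bookkeeping; item (6) is the substance, and inside it two points need care. First, making precise in what sense hyperconvexity forces $Eq_d(E)$ to be a sublattice of $Eq(E)$ --- in particular that the intersections $\bigcap_i\equiv_{r_i}$ remain of the form $\equiv_t$ --- since this is exactly where the presence, or (as in the $\N$-under-reverse-divisibility example) absence, of enough meets in $V$ matters. Second, the transfinite Chinese remainder induction: one must check that the two-element merge lemma applies to a meet of the $\equiv_{r_i}$ (so permutability of that meet with a further congruence is needed) and that it is \emph{complete} meet-distributivity, not just finite distributivity, that makes the limit steps go through --- which is precisely why hyperconvexity, and not merely finite hyperconvexity, matches the strengthened lattice condition.
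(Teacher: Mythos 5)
Your treatment of items (1)--(5) is correct and matches the paper's, which disposes of (1)--(4) as immediate and proves (5) by exactly your ball reformulation: $\equiv_s\circ\equiv_r\subseteq\equiv_{r\vee s}$ from the triangle inequality, with the reverse inclusion equivalent to convexity. The genuine difference is item (6): the paper gives no proof at all, citing Proposition 3.6.12 of Pouzet--Rosenberg, whereas you supply the standard argument (2-Helly applied to the pairwise-intersecting family $\{B(x,r_i)\}_i\cup\{B(y,u)\}$ for one direction, a transfinite Chinese-remainder induction using permutability at successor steps and the infinitary distributive law at limit steps for the other). That outline is sound and is essentially the argument of the cited reference, so your version is self-contained where the paper is not. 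The one point you flag but do not close is real: for $\bigwedge_i\equiv_{r_i}$ in $Eq_d(E)$ to be the intersection $\bigcap_i\equiv_{r_i}$ and to remain of the form $\equiv_t$, you need the meets of item (3), i.e.\ that every non-empty subset of $V$ has an infimum; the statement of item (6) does not assume this explicitly, and hyperconvexity alone does not obviously supply it (the paper's own $\N$-under-reverse-divisibility example shows the issue is not vacuous). Either add that hypothesis to (6), or interpret ``completely meet-distributive lattice'' with meets computed internally to $Eq_d(E)$ and verify they exist there; as written, your forward direction silently identifies $\bigwedge_i\equiv_{r_i}$ with $\bigcap_i\equiv_{r_i}$. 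Everything else is in order.
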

\begin{proof}The first two item are  immediate. Trivially,  each $\equiv_r$ is an equivalence relation and it  is preserved by all contracting maps.  Item (3). Let $\equiv_{r_i}$, $i\in I$ be a family of members of  $Eq_d(E)$ then 
$\bigcap_{i\in I} \equiv_{r_i}$ equals $\equiv_r$ where $r:= \bigwedge \{r_i: i\in I\}$. Item (4). Since $(x,y)\in \equiv_r$ and $\equiv_r$ is preserved by all contractions, we have $\delta(x,y)\subseteq \equiv_r$. Since $Cong_{d}(E)$ is the  congruence lattice of an algebra it is algebraic. The fact that $\rho(x,y)$ is algebraic follows from the algebraicity of $Cong_{d}(E)$. 
 Item (5) is  Proposition 3.6.7 of \cite {pouz-rose}. We recall the proof.  Let $r,s\in V$. Due to the triangular inequality, we have $\equiv_s\circ \equiv r \subseteq \equiv _{r\vee s}$. We claim that the equality holds whenever  $(E,d)$ is convex. Let $t:= r\vee s$ and  $(x,y)\in \equiv_t$. Since $d(x,y)\leq t= r\vee s$ and $(E,d)$ is convex, the closed balls $B(x, r)$ and $B(y, s)$ intersect. If  $z$ belongs to this intersection, then $d(x,z)\leq r$ and $d(y, s)\leq  t$ hence $(x,y)\in \equiv_s\circ \equiv_r$. This proves our claim. Conversely, let $B(x, r)$ and $B(y, s)$ with $d(x,y)\leq r\vee s$, that is $(x,y)\in \equiv_{r\vee s}$. We have $\equiv_r \vee \equiv_ s \subseteq \equiv_{r \vee s}$ and since $r$ and $s$ commute, $\equiv_{s}\circ \equiv_r=  \equiv_r \vee \equiv_s$. Due to our assumption $\equiv_{r} \vee \equiv_{s}= \equiv_{r \vee s}$, hence $\equiv_r \vee \equiv_s$ is the join in $ Eq_d(E)$;  furthermore, since  $\equiv_{s}\circ \equiv_r=  \equiv_r \vee \equiv_s$ there is some $z\in E$ such that $z\in  B(x, r)\cap B(y, s)$. 
Item (6) is Proposition 3.6.12 of \cite {pouz-rose}.
\end{proof}

\begin{corollary} If $(E,d)$ is convex the map $r\rightarrow \equiv_r$ is  a lattice homomorphism from $V$ into $Eq_d(E)$.  
\end{corollary}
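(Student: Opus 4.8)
The plan is to derive this directly from item (5) of Proposition \ref{fourre-tout}, which already carries the substance of the join-preservation, together with a one-line computation for meets.

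First I would recall the elementary universal-algebra fact that if two equivalence relations $\theta,\rho$ on $E$ commute (that is, $\theta\circ\rho=\rho\circ\theta$), then $\theta\circ\rho$ is itself an equivalence relation and coincides with the supremum $\theta\vee\rho$ computed in the lattice $Eq(E)$ of all equivalence relations on $E$. Next, fix $r,s\in V$. By item (5) of Proposition \ref{fourre-tout}, convexity of $(E,d)$ yields both that $\equiv_r$ and $\equiv_s$ commute and that $\equiv_r\circ\equiv_s=\equiv_{r\vee s}$; combining this with the fact just recalled gives $\equiv_r\vee\equiv_s=\equiv_{r\vee s}$, the join being taken in $Eq(E)$. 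Since the relation $\equiv_{r\vee s}$ lies in $Eq_d(E)$, it is a fortiori the join of $\equiv_r$ and $\equiv_s$ inside the subposet $Eq_d(E)$ as well; hence $r\mapsto\equiv_r$ preserves joins.

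For meets, whenever $r\wedge s$ exists in $V$ I would simply note that $\equiv_r\cap\equiv_s=\{(x,y)\in E\times E: d(x,y)\leq r \text{ and } d(x,y)\leq s\}$, and that $d(x,y)$ is a common lower bound of $r$ and $s$ if and only if $d(x,y)\leq r\wedge s$; therefore $\equiv_r\cap\equiv_s=\equiv_{r\wedge s}$. This needs no convexity, and since the meet in $Eq_d(E)$ is inherited (as intersection) from $Eq(E)$, it shows meets are preserved too. Putting the two computations together gives that $r\mapsto\equiv_r$ is a lattice homomorphism from $V$ into $Eq_d(E)$ (a join-semilattice homomorphism in general, and a full lattice homomorphism whenever $V$ is a lattice).

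The only point that deserves care — rather than being a genuine obstacle — is the passage from the join computed in $Eq(E)$ to the join computed in the sublattice $Eq_d(E)$: this is legitimate precisely because convexity, through item (5), guarantees that $Eq_d(E)$ is closed under the $Eq(E)$-join, so that least upper bounds agree. Everything else is bookkeeping, and no case analysis beyond the one already done in Proposition \ref{fourre-tout} is required.
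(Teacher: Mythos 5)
Your proof is correct and is essentially the intended derivation: the paper states this corollary without any proof, as an immediate consequence of item (5) of Proposition \ref{fourre-tout}, and your writeup supplies exactly the two missing ingredients (the standard fact that commuting equivalence relations join via composition in $\Eqv(E)$, and the direct computation $\equiv_r\cap\equiv_s=\equiv_{r\wedge s}$ for meets). Your explicit caveat that meets need not exist when $V$ is merely a join-semilattice, so that one gets a join-semilattice homomorphism in general and a full lattice homomorphism only when $V$ is a lattice, is a welcome precision that the paper's statement glosses over.
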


\begin{theorem}\label{thm:hyperconvex} If an ultrametric space  $(E,d)$  is hyperconvex, then every member of  $Cong_{d}(E)$ is a join of equivalence relations of the form $\equiv_r$, for $r\in V$. 
\end{theorem}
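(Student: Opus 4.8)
The plan is to reduce the statement to a single extension property and then read the conclusion off it. Concretely, I would show
\[
(x,y)\in\theta\in Cong_d(E)\ \Longrightarrow\ \equiv_{d(x,y)}\subseteq\theta .
\]
Granting this, each $\equiv_{d(x,y)}$ lies in $Eq_d(E)\subseteq Cong_d(E)$ by item (1) of Proposition \ref{fourre-tout}, so it is a legitimate element over which to form joins inside $Cong_d(E)$; and then $\theta=\bigvee_{(x,y)\in\theta}\equiv_{d(x,y)}$, because each summand is contained in $\theta$ while each pair of $\theta$ already lies in the summand indexed by that pair (for $\theta=\Delta_E$ this reads $\Delta_E=\equiv_0$, which is clear). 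That is exactly the assertion of the theorem.

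The heart of the matter is therefore the following use of hyperconvexity: whenever $d(u,v)\le d(x,y)$, there is a non-expansive map $f\colon E\to E$ with $f(x)=u$ and $f(y)=v$. I would prove this by a one-point-at-a-time extension. Order by extension the set of non-expansive maps $g$ whose domain is a subset $S$ of $E$ with $\{x,y\}\subseteq S$ and $g(x)=u$, $g(y)=v$; unions of chains are again such maps, so Zorn's lemma yields a maximal one $g^{*}\colon S^{*}\to E$. If $S^{*}\ne E$, pick $z\in E\setminus S^{*}$. For $s,s'\in S^{*}$ the contraction property and the ultrametric inequality give $d(g^{*}(s),g^{*}(s'))\le d(s,s')\le d(s,z)\vee d(z,s')$, so by convexity the balls $B(g^{*}(s),d(s,z))$ and $B(g^{*}(s'),d(s',z))$ meet; by the (full) $2$-Helly property the family $\{B(g^{*}(s),d(s,z)):s\in S^{*}\}$ then has a common point $w$, and setting $g^{*}(z):=w$ keeps $g^{*}$ non-expansive, contradicting maximality. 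Hence $S^{*}=E$ and $f:=g^{*}$ works. (Equivalently, this is the classical Aronszajn--Panitchpakdi fact that a hyperconvex space is an injective object for non-expansive maps, applied to the $2$-point subspace $\{x,y\}\subseteq E$.)

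To finish, fix $\theta\in Cong_d(E)$ with $(x,y)\in\theta$ and let $(u,v)\in\equiv_{d(x,y)}$, i.e. $d(u,v)\le d(x,y)$. Take $f$ as above; $f$ is a contraction, hence one of the unary operations of the algebra whose congruences constitute $Cong_d(E)$, so $\theta$ is preserved by $f$. From $(x,y)\in\theta$ we conclude $(u,v)=(f(x),f(y))\in\theta$, which is the displayed implication.

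The only real obstacle is the extension property in the second paragraph: that is where both parts of hyperconvexity (convexity and the $2$-Helly property) are genuinely used, and where one must be a little careful with the transfinite step; note that $d(u,v)\le d(x,y)$ enters only through the triangle inequality $d(s,s')\le d(s,z)\vee d(z,s')$, which holds in any ultrametric space over $V$, so the argument really is the injectivity of hyperconvex ultrametric spaces. Everything else is bookkeeping about joins in $Cong_d(E)$.
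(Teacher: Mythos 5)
Your proof is correct and follows essentially the same route as the paper: given $(x,y)\in\theta\in Cong_d(E)$ and $d(u,v)\le d(x,y)$, extend the partial contraction $x\mapsto u$, $y\mapsto v$ to a total contraction, which must preserve $\theta$, so $\equiv_{d(x,y)}\subseteq\theta$ and $\theta$ is the join of the $\equiv_r$ it contains. The only difference is that the paper simply invokes hyperconvexity for the extension step, whereas you spell out the Aronszajn--Panitchpakdi one-point-extension argument (Zorn plus convexity plus the $2$-Helly property) in full.
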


\begin{proof} Let $\rho$ be an equivalence relation on $E$. Let $(x,y)\in \rho$ and $r:= d(x,y)$. We claim that if $\rho$ is preserved by every contracting map then $\equiv_r\subseteq \rho$. Indeed, let $(x',y') \in \equiv_r$. The (partial) map $f$ sending $x$ to $x'$ and $y$ to $y'$ is contracting. Since $(E,d)$ is hyperconvex, it extends to $E$ to a contracting map $\overline f$. Since $\rho$ must be preserved by $\overline f$, and $(x,y)\in \rho$,  we have $(x',y')\in \rho$. This proves our claim. From item (4) of Proposition \ref{fourre-tout} it follows that $\delta(x,y)= \equiv_r$. Also,  $\rho$ is the union of all $\equiv_r$ it contains. 
\end{proof}

\begin{lemma}\label{residual2}  $V$ is algebraic, the residual of two compact elements is compact. 
\end{lemma}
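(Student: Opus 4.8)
The plan is to fix two compact elements $x,y$ of $V$ and show their residual $z:=x\setminus y$ is again compact. First recall that, the ambient lattice being residuated --- by Lemma~\ref{residual3} this is, for a complete lattice, exactly complete meet-distributivity, which is the standing hypothesis here --- the element $z$ is well defined as the \emph{least} element of $D(x,y)=\{w\in V:\ x\leq y\vee w\}$; in particular $x\leq y\vee z$.

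To prove $z$ compact I must verify that whenever $z\leq\bigvee S$ for some subset $S\subseteq V$, one already has $z\leq\bigvee S_{0}$ for some finite $S_{0}\subseteq S$. The idea is to transfer the covering relation from $z$ up to $x$, where compactness is available by assumption. From $z\leq\bigvee S$ together with $x\leq y\vee z$ we obtain
$$x\ \leq\ y\vee z\ \leq\ y\vee\bigvee S\ =\ \bigvee\bigl(\{y\}\cup S\bigr).$$
Compactness of $x$ then yields a finite set $F\subseteq\{y\}\cup S$ with $x\leq\bigvee F$. Put $S_{0}:=F\setminus\{y\}$, a finite subset of $S$; then $\bigvee F\leq y\vee\bigvee S_{0}$, so $x\leq y\vee\bigvee S_{0}$, i.e. $\bigvee S_{0}\in D(x,y)$. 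Since $z$ is the least element of $D(x,y)$, we conclude $z\leq\bigvee S_{0}$ with $S_{0}$ finite, which is exactly what was required.

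I do not anticipate a genuine obstacle: the only point needing a word of care is the routine bookkeeping when passing from a finite subset $F$ of $\{y\}\cup S$ to the finite subset $S_{0}=F\setminus\{y\}$ of $S$, together with the preliminary remark that one must first know $x\setminus y$ exists as an actual minimum of $D(x,y)$, which is precisely what Lemma~\ref{residual3} provides. It is worth noting that the argument does not in fact invoke algebraicity of $V$ itself: it uses only that $V$ is residuated, so that $x\setminus y$ is defined, and that $x$ is compact.
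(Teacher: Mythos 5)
Your proof is correct and follows essentially the same route as the paper: pass from $x\setminus y\leq\bigvee S$ to $x\leq y\vee\bigvee S$, invoke compactness of $x$, and use minimality of the residual to come back down. You are in fact slightly more careful than the paper (which silently absorbs $y$ into the finite subcover), and your closing remark that only compactness of $x$ and the existence of the residual are used is accurate --- the paper's own argument uses no more.
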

 \begin{proof} Suppose $x$ and $y$ compact. Suppose $x\setminus y\leq \bigvee Z$ for some subset $Z$ of $V$.  We have $x \leq y \bigvee Z$. Since $x$ is compact, $x\leq y \bigvee Z'$ for some finite $Z'\subseteq Z$. Since $x\setminus y$ is the least $z$ such that $x\leq y\vee z$, we have $x\setminus y\leq \bigvee Z'$ proving that $x\setminus y$ is compact. 
\end{proof}

\begin{theorem} Let $L$ be an algebraic lattice and $K(L)$ be the join-semilattice of compact elements  of $L$. If $L$ is meet-distributive then $K(L)$ has an ultrametric structure 
and  $L$ is isomorphic to the set of equivalence relations on  $K(L)$  preserved by all contracting maps on $K(L)$. 
\end{theorem}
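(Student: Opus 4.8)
The plan is to assemble this final theorem from the machinery already developed in the excerpt, treating it as the ``representation'' counterpart of Theorem~\ref{thm:hyperconvex}. Let $L$ be an algebraic lattice which is meet-distributive, and let $V := K(L)$ be its join-semilattice of compact elements. Since $L$ is algebraic, $L$ is (isomorphic to) the ideal completion of $V$, so every element of $L$ is the join of the compact elements below it; in particular $V$ is a join-semilattice with least element $0 = 0_L$. First I would observe that $L$ being meet-distributive forces the needed residuation: by Lemma~\ref{residual3}, $L$ completely meet-distributive is equivalent to $L$ residuated, and by Lemma~\ref{residual2} the residual of two compact elements is again compact, so $V = K(L)$ is itself residuated (as a join-semilattice, every pair has a residual lying in $V$). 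Hence by Lemma~\ref{residual} the map $d_V(x,y) := (x\setminus y)\vee(y\setminus x)$ is a genuine ultrametric distance on $V$ over $V$ itself, with $d_V(0,x) = x$. This equips $K(L)$ with its ultrametric structure, establishing the first assertion.

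Next I would verify that the ultrametric space $(V, d_V)$ is hyperconvex, so that Theorem~\ref{thm:hyperconvex} and Proposition~\ref{fourre-tout}(6) apply. Here I would invoke the theorem stated just before Lemma~\ref{residual2}: if a join-semilattice $V$ is a Heyting algebra (equivalently here, completely meet-distributive and complete) then $(V, d_V)$ is hyperconvex, and moreover every ultrametric space over $V$ embeds isometrically into a power of $V$. There is a subtlety: that theorem is phrased for $V$ a Heyting algebra, i.e. a complete completely-meet-distributive lattice, whereas $K(L)$ is typically only a join-semilattice, not complete. The fix is to work inside $L$ itself: $L$ is a complete meet-distributive lattice, hence hyperconvex as an ultrametric space $(L, d_L)$ over $L$, and $(V, d_V)$ is the restriction of $(L, d_L)$ to the compact elements with distances valued in $K(L)$; one checks finite hyperconvexity directly on $V$ by solving ball-intersection systems in $L$ and noting the solution can be taken compact because the data are compact. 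This is the step I expect to be the main obstacle — reconciling the completeness hypothesis of the general metrisation theorem with the fact that $K(L)$ need not be complete — and it will require a short direct argument that the relevant finite intersections of balls in $(V,d_V)$ are non-empty, using residuation in $L$ and compactness.

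Once hyperconvexity of $(V, d_V)$ is in hand, the conclusion is essentially a dictionary translation. By Proposition~\ref{fourre-tout}(1)--(2), the contracting self-maps of $(V,d_V)$ are exactly the maps preserving all $\equiv_r$, $r\in V$, and $Cong_d(V)$ is the congruence lattice of the algebra $(V, \mathrm{Hom}((V,d_V),(V,d_V)))$. By Theorem~\ref{thm:hyperconvex}, every member of $Cong_d(V)$ is a join of relations $\equiv_r$; combined with Proposition~\ref{fourre-tout}(4)--(5) (convexity, which follows from hyperconvexity, makes $r\mapsto \equiv_r$ a lattice homomorphism from $V$ into $Eq_{d_V}(V)$ with $\delta(x,y)=\equiv_{d_V(x,y)}$), this shows that $Cong_d(V)$ is precisely the algebraic lattice generated under arbitrary joins by the compact elements $\equiv_r$. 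Finally I would exhibit the isomorphism $L \to Cong_d(V)$: send a compact element $r\in V = K(L)$ to $\equiv_r$, and extend to all of $L$ by joins, i.e. $c \mapsto \bigvee\{\equiv_r : r \in K(L),\ r\leq c\}$. That this is well-defined and bijective follows because $d_V(0,x)=x$ separates points of $V$ (so $\equiv_r \le \equiv_s \iff r\le s$ for compact $r,s$, by testing on the pair $(0,r)$), and because both $L$ and $Cong_d(V)$ are the ideal completions of the isomorphic join-semilattices $K(L)$ and $\{\equiv_r : r\in V\}$. Checking that joins and meets are preserved uses Proposition~\ref{fourre-tout}(3) (meets of the $\equiv_r$ computed via meets in $V$, which is exactly where meet-distributivity of $L$ is consumed) and the lattice-homomorphism property of $r\mapsto\equiv_r$. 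This completes the proof.
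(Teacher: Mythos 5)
Your proposal follows essentially the same route as the paper's own (three-sentence) proof: equip $V:=K(L)$ with the distance $d_V$ via Lemmas \ref{residual3}, \ref{residual2} and \ref{residual}, deduce hyperconvexity of $(V,d_V)$ from meet-distributivity, and then apply Theorem \ref{thm:hyperconvex} together with Proposition \ref{fourre-tout} to identify $\Cong_{d}(V)$ with $L$. You in fact supply more detail than the paper does --- notably the explicit isomorphism $c\mapsto\bigvee\{\equiv_r : r\in K(L),\ r\leq c\}$ and the flagged subtlety that $K(L)$ need not be complete, which the paper's bare assertion \emph{``Due to meet-distributivity, $V$ is hyperconvex''} passes over in silence.
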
 

\begin{proof} Due to Lemma \ref{residual}, we may define on  $V:= K(L)$ the distance $d_V$. Due to meet-distributivity, $V$ is hyperconvex. According to Theorem \ref{thm:hyperconvex} each equivalence relation preserved by all contracting operation is a join of equivalence relations of the form $\equiv_r$ for some $r:=d_V(a,b)$. 
\end{proof}
\begin{corollary} If $V$ is a finite distributive lattice, then $V$ is isomorphic to the lattice of equivalence relations preserved by all contracting maps from $V$ into itself, $V$ being equipped with the distance $d_V$. 
\end{corollary}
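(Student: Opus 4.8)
The plan is to read this off the theorem immediately above as the special case $L := V$; the entire task is to check that a finite distributive lattice satisfies that theorem's hypotheses. First I would recall that any finite lattice is algebraic and that every one of its elements is compact, so that $K(V) = V$ as join-semilattices; thus the conclusion of the preceding theorem, applied with $L = V$, literally asserts that $V$ is isomorphic to the lattice of equivalence relations on $V$ preserved by all contracting self-maps of $(V, d_V)$. Second, I would invoke Corollary~\ref{cor:distributive1} to get that a finite distributive lattice is residuated, so that Lemma~\ref{residual} indeed produces the distance $d_V$ referred to in the statement.

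The one hypothesis that is not completely immediate is meet-distributivity in the sense of the paper, namely $\bigwedge\{y\vee z : z\in Z\} = y\vee\bigwedge Z$ for all $Z\subseteq V$ and $y\in V$. Here I would argue that in a finite distributive lattice this holds for every finite $Z$ by a trivial induction on $|Z|$ from the binary distributive law $(y\vee z_1)\wedge(y\vee z_2) = y\vee(z_1\wedge z_2)$; and since $V$ is finite, every subset of $V$ is finite, so $V$ is in fact completely meet-distributive. With this in hand the preceding theorem applies and yields the corollary; in particular $(V,d_V)$ is hyperconvex, which is what Theorem~\ref{thm:hyperconvex} needs.

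For definiteness I would record the isomorphism explicitly: it is the map $r\mapsto {\equiv_r}$ of the corollary following Proposition~\ref{fourre-tout} (a lattice homomorphism $V\to Eq(V)$), which lands in and is onto $Cong_{d_V}(V)$ by Theorem~\ref{thm:hyperconvex} together with item~(5) of Proposition~\ref{fourre-tout} (commuting equivalence relations compose to their join, so joins of the $\equiv_r$ are again of that form); injectivity follows from $d_V(0,r)=r$, which shows $\equiv_r$ determines $r$. I do not expect any genuine obstacle: the corollary is purely a matter of unwinding the hypotheses of the theorem it is attached to, the only (routine) verification being the meet-distributivity of a finite distributive lattice.
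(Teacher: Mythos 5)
Your proposal is correct and matches the paper's (implicit) argument: the corollary is stated without proof precisely because it is the specialization $L:=V$ of the preceding theorem, and the routine facts you supply --- that a finite lattice is algebraic with every element compact so $K(V)=V$, and that finite distributivity gives complete meet-distributivity via the binary law --- are exactly what is needed to invoke it. Your explicit identification of the isomorphism $r\mapsto{\equiv_r}$ is a useful addition but does not change the route.
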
 

In guise of conclusion, we get:

\begin{corollary} \label{cor:representation}A finite  lattice is distributive iff it is representable as an arithmetical lattice.
\end{corollary}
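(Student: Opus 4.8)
The plan is to handle the two implications separately. The forward direction is definitional: if $L$ is isomorphic to an arithmetical sublattice $L'$ of some $\Eqv(A)$, then $L'$ — and hence $L$ — is distributive, since distributivity is built into the definition of an arithmetical sublattice. So there is nothing to do there.

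For the converse, suppose $L$ is finite and distributive. By Corollary \ref{cor:distributive1}, $L$ is residuated, so Lemma \ref{residual} endows $V := L$ with the ultrametric distance $d_V$; moreover, regarding $L$ as a finite (hence complete) lattice, Lemma \ref{residual3} gives that $V$ is completely meet-distributive, so it is a Heyting algebra and $(V, d_V)$ is hyperconvex, in particular convex. By the Corollary immediately preceding the present statement, $L$ is isomorphic, as a sublattice of $\Eqv(V)$, to the lattice $Cong_{d_V}(V)$ of equivalence relations on $V$ preserved by every contracting self-map. It then remains only to verify that this copy of $L$ sitting inside $\Eqv(V)$ is arithmetical. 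Distributivity is inherited from $L$. For commutation under composition I would argue as follows: by Theorem \ref{thm:hyperconvex}, every member of $Cong_{d_V}(V)$ is a join of relations of the form $\equiv_r$; since $V$ is finite this is a finite join, and by item (5) of Proposition \ref{fourre-tout} — available because $(V,d_V)$ is convex — the join of two such relations is again of this form, with $\equiv_r \circ \equiv_s = \equiv_s \circ \equiv_r = \equiv_{r\vee s}$. Hence $Cong_{d_V}(V) = Eq_{d_V}(V)$, and any two of its members commute under composition. Therefore this sublattice of $\Eqv(V)$ is arithmetical, and $L$ is representable as an arithmetical lattice.

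The only step that needs care is the identification $Cong_{d_V}(V) = Eq_{d_V}(V)$ together with the accompanying commutation: it rests on chaining hyperconvexity $\Rightarrow$ convexity $\Rightarrow$ (item (5) of Proposition \ref{fourre-tout}) the formula $\equiv_r \circ \equiv_s = \equiv_{r\vee s}$, and then exploiting finiteness of $L$ to collapse the joins appearing in Theorem \ref{thm:hyperconvex} to finite ones. Everything else is bookkeeping over the lemmas already proved, and no new computation is required.
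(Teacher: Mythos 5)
Your proof is correct and follows essentially the route the paper intends: the corollary is stated there as an immediate consequence of the preceding results (Corollary \ref{cor:distributive1}, Lemma \ref{residual}, Lemma \ref{residual3}, Theorem \ref{thm:hyperconvex} and item (5) of Proposition \ref{fourre-tout}), and you have simply made that chain explicit. The one step you spell out beyond the paper --- identifying $Cong_{d_V}(V)$ with $Eq_{d_V}(V)$ via finiteness and the formula $\equiv_r\circ\equiv_s=\equiv_{r\vee s}$, so that commutation holds --- is exactly the intended reading, so nothing more is needed.
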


\noindent {\bf Bibliographical comment. }
The study of metric spaces with distance values in a Boolean algebra  first appears in Blumenthal \cite {blum}.  A  study of metric spaces with distance   values in a Heyting algebra is in \cite{jawhari-all}; a more general study   is in \cite{pouz-rose}. Ultrametric spaces with   distance values in an ordered set have been studied by Priess-Crampe and Ribenboim in  several papers, e.g.  \cite{ribenboim1}, \cite{ribenboim2},\cite{ribenboim3}. \\

\noindent {\bf Acknowledgements.} We are pleased to thank Serge Grigorieff for discussions    about CGG's theorem and bibliographical informations (notably \cite{bhargava}, \cite{polya}).

\end{document}